\documentclass[11pt]{amsart}

\setlength{\oddsidemargin}{0cm}
\setlength{\evensidemargin}{0cm}
\setlength{\topmargin}{-.5in}
\setlength{\textheight}{9in}
\setlength{\textwidth}{6.5in}

\usepackage[colorlinks=true, linkcolor=blue, anchorcolor=blue, citecolor=blue, filecolor=blue, menucolor= blue, urlcolor=red]{hyperref}

\usepackage{amsmath,amssymb,amsthm}
\usepackage{verbatim}
\usepackage{comment}

\long\def\eatit#1{}
\newtheorem{thm}{Theorem}[section]
\newtheorem{prop}[thm]{Proposition}
\newtheorem{lem}[thm]{Lemma}
\newtheorem{cor}[thm]{Corollary}

\theoremstyle{definition}

\newtheorem{rem}[thm]{Remark}

\newcommand{\pr}[1]{{{\bf P}^{#1}}}

\newcommand{\field}{K}

\begin{document}
\title{Containment Counterexamples for ideals of various configurations of points in $\pr N$}

\author{Brian Harbourne}
\address{Department of Mathematics\\
University of Nebraska\\
Lincoln, NE 68588-0130 USA}
\email{bharbour@math.unl.edu}
\author{Alexandra Seceleanu}
\address{Department of Mathematics\\
University of Nebraska\\
Lincoln, NE 68588-0130 USA}
\email{aseceleanu@unl.edu}

\thanks{The first author's work on this project
was sponsored by the National Security Agency under Grant/Cooperative
agreement ``Topics in Algebra and Geometry, Curves and Containments'' Number H98230-13-1-0213.
The United States Government is authorized to reproduce and distribute reprints
notwithstanding any copyright notice. }

\begin{abstract}
When $I$ is the radical homogeneous ideal of a finite set of points in 
projective $N$-space, $\pr{N}$, over a field $\field$, it has been conjectured
that $I^{(rN-N+1)}$ should be contained in $I^r$ for all $r\geq 1$. Recent counterexamples
show that this can fail when $N=r=2$. We study properties of the resulting ideals.
We also show that failures occur for 
infinitely many $r$ in every characteristic $p>2$ when $N=2$,
and we find additional positive characteristic failures when $N>2$.
\end{abstract}

\date{June 10, 2013}

\subjclass[2000]{Primary: 13F20, 
Secondary:  13A02, 
14N05} 

\keywords{symbolic powers, fat points, homogeneous ideals, polynomial rings, projective space}

\maketitle

\section{Introduction}

We work over an arbitrary field $\field$.
Let $p_1,\ldots,p_n\in\pr N$ and let $I(p_i)\subset \field[\pr N]=\field[x_0,\ldots,x_N]$ 
be the ideal generated by all homogeneous polynomials vanishing on the point $p_i$.
Thus $I=\cap_iI(p_i)$ is the ideal generated by all homogeneous polynomials vanishing 
on each point $p_i$ and $I^{(m)}= \cap_i(I(p_i)^m)$ is the $m$-th symbolic power of the ideal.
Equivalently, $I^{(m)}$ is the saturation of $I^m$ with respect to the ideal $M=(x_0,\ldots,x_N)$.

In \cite{refELS,refHoHu} it is shown that $J^{(rN)}\subseteq J^r$ for all homogeneous ideals
$J\subseteq\field[\pr N]$ for all $r\geq 1$, or, equivalently, that $J^{(m)}\subseteq J^r$ 
for all homogeneous ideals 
$J\subseteq\field[\pr N]$ whenever $m/r \geq N$. This raises the question:
what is the smallest real $c$ such that $J^{(m)}\subseteq J^r$ for all homogeneous ideals 
whenever $m/r \geq c$? It is shown in \cite{refBH} that the smallest such $c$ is $c=N$.
In that sense, the result of \cite{refELS,refHoHu} is optimal. But while the containment
$J^{(rN)}\subseteq J^r$ cannot be made tighter by replacing $N$ by a smaller
coefficient, one can ask whether there might be an integer $c>0$ such that
$J^{(rN-c)}\subseteq J^r$ always holds. As a specific problem, 
in the case of any radical ideal $I\subseteq \field[\pr2]$ defining a finite set of points of $\pr2$, 
Craig Huneke asked if $I^{(3)}\subseteq I^2$ always holds. 
After exploring examples, the first author conjectured \cite{refB. et al}
that $I^{(Nr-N+1)}\subseteq I^r$ should hold for all $r\geq 1$ and all $N\geq 1$. 
(For example it is known that $I^{(Nr-N+1)}\subseteq I^r$ holds when $r$ is a power of 
$\operatorname{char}(\field)$ and when $I$ is a monomial ideal \cite{refB. et al}, and in many other 
cases \cite{refBH, refBH2, refBCH, refDJ, refDu, refHaHu}.)

One can also make the containment $J^{(rN)}\subseteq J^r$ tighter by replacing the ideal $J^r$
by something smaller. This led the authors of \cite{refHaHu} to explore the possibility of containments of the form
$J^{(rN)}\subseteq M^iJ^r$ where $M=(x_0,\ldots,x_N)$.
They propose several additional conjectures along these lines.
For example, for any radical ideal $I\subseteq \field[\pr{N}]$ defining a finite set of points of $\pr{N}$,
they conjecture that $I^{(rN)}\subseteq M^{r(N-1)}I^r$ and $I^{(rN-N+1)}\subseteq M^{(r-1)(N-1)}I^r$.

The conjecture $I^{(rN)}\subseteq M^{r(N-1)}I^r$ is still open.
The first indication that the conjecture $I^{(rN-N+1)}\subseteq M^{(r-1)(N-1)}I^r$ 
was overly optimistic arose with the example
$$I=(x_0^2x_1,x_1^2x_2,x_2^2x_0,x_0x_1x_2)\subset\field[\pr2]$$ 
\cite{refEHS} (announced in a talk in October 2012 by A.\ Hoefel). 
For this example we have $I^{(rN-N+1)}\not\subseteq M^{(r-1)(N-1)}I^r$
for $N=r=2$. (It should be noted that although $I$ in this case is not a radical ideal
and thus not a counterexample to the precise statement of the conjecture, 
$I$ is the ideal of six points of $\pr2$, these six points being the three coordinate vertices
and three infinitely near points corresponding to tangent directions pointing along the coordinate axes,
and thus $I$ is quite close to being a counterexample to the spirit of the conjecture.)

More recently counterexamples with $N=r=2$ have arisen over the complex numbers $\field={\bf C}$
for the conjecture $I^{(Nr-N+1)}\subseteq I^r$ \cite{refDST}, thus answering Huneke's original question negatively.
The main counterexample of \cite{refDST} is $I=(x_0(x_1^3-x_2^3),x_1(x_0^3-x_2^3),x_2(x_0^3-x_1^3))$
(they remark that the exponents of 3 can be replaced by any integer $m\geq 3$). This ideal
is the ideal of 12 points lying at the points of pair-wise intersections of 9 lines
arranged so each point lies on three lines and each line passes through 4 points. 
This counterexample inspired another similar counterexample given in \cite{refBCH},
comprising any 12 of the 13 ${\bf Z}/3{\bf Z}$-points in $\pr2$ over any field $\field$ of characteristic 3.

In this paper we study these initial counterexamples to $I^{(Nr-N+1)}\subseteq I^r$
and we provide additional counterexamples for various values of $N$ and $r$
but all in positive characteristic. Our results raise a number of questions.
For example, are the counterexamples for $r>2$ or for $N>2$ purely a 
positive characteristic phenomenon? Even if this turns out to be the case,
it is at least clear that there is no universal containment statement of the form
$I^{(Nr-N+1)}\subseteq I^r$. But having $I^{(Nr-c)}\subseteq I^r$ for $c=N-1$
was the most optimistic possibility
(in the sense that there are easy counterexamples to $I^{(Nr-N)}\subseteq I^r$).
The most conservative possibility would be to take $c=1$ with $N>2$;
i.e., it still may be true that $I^{(Nr-1)}\subseteq I^r$ holds for all radical
ideals $I$ of finite sets of points in $\pr{N}$ for all $r\geq 1$ as long as $N>2$.
An updated version of Huneke's question would thus be:
is it always true for the ideal $I$ of a finite set of points in $\pr3$ that
$I^{(5)}\subseteq I^2$?

This paper is structured as follows. In section \ref{ressect} we show that
the resurgences of the counterexample ideals of \cite{refDST} and \cite{refBCH}
are different, and thus, despite their combinatorial similarities, they are
significantly different algebraically.
In section \ref{addcntrex} we obtain counterexamples over $\pr2$
to $I^{(2r-1)}\subseteq I^r$ for infinitely many $r$.
In section \ref{N>=2} we provide counterexamples
to $I^{(Nr-N+1)}\subseteq I^r$ for various values of $N$ and $r$
for various characteristics $p>0$.

\subsection{More details}
The ideal $I$ for the counterexample of \cite{refDST} is 
$I=(x_0(x_1^3-x_2^3),x_1(x_0^3-x_2^3),x_2(x_0^3-x_1^3))$ with $\field={\bf C}$. 
The zero locus of $I$ consists of the
three coordinate vertices, $[1:0:0],[0:1:0]$ and $[0:0:1]$, plus the 9 point complete intersection defined by
$x_0^3-x_1^3$ and $x_1^3-x_2^3$. Note that the cubic pencil defined by 
$x_0^3-x_1^3$ and $x_1^3-x_2^3$ also contains the form $x_0^3-x_2^3$.
Indeed these are precisely the singular members of this pencil. 
The form $F=(x_1^3-x_2^3)(x_0^3-x_1^3)(x_0^3-x_1^3)$ is clearly in $I^{(3)}$ since it defines
9 lines, some three of which go through each of the 12 points,
but it can be shown that $F\not\in I^2$, hence $I^{(3)}\not\subseteq I^2$.

The example given in \cite{refBCH} was modeled after that of \cite{refDST}.
Let $\operatorname{char}(\field)=3$.
Let $I$ be the radical ideal defining any 12 of the 13 ${\bf Z}/3{\bf Z}$-points in $\pr2(\field)$.
Let $q$ be the excluded 13th point (we may assume $q=[1:0:0]$). There are 13 lines defined over ${\bf Z}/3{\bf Z}$, 
exactly 9 of which do not contain $q$. There are 4 ${\bf Z}/3{\bf Z}$-points on each of the 9 lines,
and each of the 12 points lies on exactly 3 of the 9 lines. Let $F$ be the product of the forms defining
these 9 lines, so $F=F_1F_2F_3$, where
$F_1=x_0(x_0^2-x_1^2)$ is the product of the three lines through the point $[0:0:1]$ which do not contain $q$,
$F_2=(x_0-x_2)((x_0-x_2)^2-x_1^2)$ is the product of the three lines through the 
point $[1:0:1]$ which do not contain $q$, and 
$F_3=(x_0+x_2)((x_0+x_2)^2-x_1^2)$ is the product of the three lines through the 
point $[-1:0:1]$ which do not contain $q$. Note that the cubic pencil defined by $F_1$ and $F_2$ contains $F_3$;
the zero locus of $I$ consists of the three points on $x_1=0$ which are the singular points
of $F_1$, $F_2$ and $F_3$, together with the 9 point complete intersection defined by $F_1$ and $F_2$.
Then $F$ is again clearly in $I^{(3)}$, but it turns out that
$I=(x_0x_1(x_0^2-x_1^2), x_0x_2(x_0^2-x_2^2), x_1x_2(x_1^2-x_2^2), x_0(x_0^2-x_1^2)(x_0^2-x_2^2))$.
Note that the cubic generators of $I$ each vanish on all 13 of the ${\bf Z}/3{\bf Z}$-points; it is only
the quintic generator that avoids $q$.
Thus the least degree of an element of $I$
which does not vanish at $q$ is 5, so the least degree of an element of $I^2$
which does not vanish at $q$ is 10, hence $F\not\in I^2$, so again $I^{(3)}\not\subseteq I^2$.

Despite the similarity of the example of \cite{refDST} with that of \cite{refBCH},
there are some notable differences. Both are based on cubic pencils having three 
singular members each of which is a union of three concurrent lines, but the singular members
for the former have their singular points at the coordinate vertices while the singular members for
the latter have colinear singular points (and the pencil is quasi-elliptic, meaning that every member of
the pencil is singular). Moreover, as we will see the 
resurgence is different for the two cases (we recall that the resurgence is 
$\rho(I)=\sup\{\frac{m}{r}:I^{(m)}\not\subseteq I^r\}$ \cite{refBH}). For the example in \cite{refDST} we have only
the upper bound $\rho(I)\leq 1.6$ but for the example of \cite{refBCH} we have the exact value
$\rho(I)=\frac{5}{3}$.

Over $\field={\bf C}$ 
the only additional counterexamples to $I^{(Nr-N+1)}\subseteq I^r$ 
known are for $N=r=2$, specifically $m^2+3$ points in the plane having ideal
$I=(x_0(x_1^m-x_2^m),x_1(x_0^m-x_1^m),x_2(x_0^m-x_1^m))$ for $m\geq 3$. 
The ideal $I$ with the same generators but taking 
$\operatorname{char}(\field)=p>2$ such that $\field$
contains a finite field of order $p^s$ for some $s\geq 1$ with $p^s -1$
divisible by $m$ also has $I^{(3)}\not\subseteq I^2$; 
see Proposition \ref{moreexs}.

In positive characteristics we have many additional examples, and not just for $N=2$, but all involve
taking all but one of the $\field'$-points of $\pr{N}(\field')$, where $\field'$ is a finite field contained in $\field$. 
If we denote the excluded point by $q$,
the method in each case is to find a degree $t$ such that $q$ is not in the zero locus of
$(I^{(Nr-N+1}))_t$ but it is in the zero locus of $(I^r)_t$ (where, given a homogeneous ideal $J$,
$J_t$ denotes the homogeneous component of $J$ of degree $t$), from which it immediately follows that
$(I^{(Nr-N+1}))_t\not\subseteq (I^r)_t$ and hence that $I^{(Nr-N+1})\not\subseteq I^r$.
For $N=2$, for example, we have $(I^{(2r-1)})_t\not\subseteq (I^r)_t$ for $r=(p^s+1)/2$ 
and $t=p^{2s}$ when $I$ is the ideal of all of the $\field'$-points except $q=[0:1:0]$ in $\pr2(\field')$ 
for a field $\field'$ with $|\field'|=p^s$ (see Proposition \ref{counterexP2charp}). 
We also have additional examples of $I^{(Nr-N+1)}\not \subseteq I^r$
taking all but one of the $\field'$-points in $\pr{N}(\field')$ where $|\field'|=p$,
some with $r=2$ and $N=(p+1)/2$, and others with $r=(p+N-1)/N$ for $N\geq 2$; see section \ref{N>=2}. 

We will use the following notation. Given any homogeneous ideal $(0)\neq J\subseteq\field[\pr N]$,
$\alpha(J)$ is the least degree among nonzero elements of $J$, and if $N=2$ and $J$ defines
a finite set of points, then $\beta(J)$ is the least degree $t$ such that the zero locus of 
$J_t$ is 0-dimensional (i.e., does not have a nonconstant common factor).

\section{Resurgences}\label{ressect}

The combinatorial structure of the arrangement of the points for the counterexample
to $I^{(3)}\subseteq I^2$ given in \cite{refDST} is the same as the characteristic 3 counterexample given in
\cite{refBCH}: both consist of 12 points occurring at the points of intersection of 9 lines,
with four points on each line and three lines through each point.
Algebraically, however, these example are different. For example, the ideal of the 12 points for
\cite{refDST} is generated in degree 4, but for \cite{refBCH} there is in addition a degree 5 generator.
This difference leads also to differences in the resurgence, as we now show.
First we consider the counterexample 
given in \cite{refDST}. 

\begin{lem}\label{HaHuProp3.5}
Let $I=(x_0(x_1^3-x_2^3),x_1(x_0^3-x_2^3),x_2(x_0^3-x_1^3))\subseteq{\bf C}[x_0,x_1,x_2]$. Then 
$I^{(3t)}=(I^{(3)})^t$ for all $t\geq 1$.
\end{lem}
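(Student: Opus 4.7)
I would prove this lemma in two parts. The direction $(I^{(3)})^t \subseteq I^{(3t)}$ is immediate: for the radical ideal $I$ of reduced points, $I^{(m)} = \bigcap_{i=1}^{12} I(p_i)^m$ and multiplication adds orders of vanishing, so a product of $t$ forms each vanishing to order at least $3$ at every $p_i$ vanishes to order at least $3t$ at every $p_i$. The substantive direction is $I^{(3t)} \subseteq (I^{(3)})^t$, which I would prove by induction on $t$ (with $t=1$ trivial).

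The crucial geometric input for the inductive step is the form $F = (x_1^3 - x_2^3)(x_0^3 - x_2^3)(x_0^3 - x_1^3) \in I^{(3)}$ of degree $9$, whose nine linear factors partition the twelve points so that each point lies on exactly three of the lines and each line contains exactly four of the points. Fix a linear factor $\ell$ of $F$ and any $G \in I^{(3t)}$. If $\ell \nmid G$, then $G|_\ell$ is a nonzero univariate polynomial vanishing to order at least $3t$ at each of the four points of $V(I) \cap \ell$, forcing $\deg G \geq 12t$ by Bezout. Applied to all nine linear factors simultaneously, this gives: $\deg G < 12 t$ implies $F \mid G$, and writing $G = F \cdot H$, the fact that $F$ vanishes to order exactly $3$ at each $p_i$ shows $H \in I^{(3(t-1))}$, so the inductive hypothesis yields $G \in (I^{(3)})^t$ in this low-degree regime.

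The main obstacle is the complementary regime $\deg G \geq 12 t$, where $F$ need not divide $G$ and naive Bezout breaks down. Here I would try to reduce to the low-degree regime by exhibiting $H_0 \in (I^{(3)})^{t-1}$ such that $G' := G - F \cdot H_0$ still lies in $I^{(3t)}$ but has strictly smaller degree. Because $G$ and $F \cdot H_0$ are both homogeneous of the same degree, this is really the assertion that in each degree $d \geq 12 t$ the multiplication map
\[
F \cdot (I^{(3)})^{t-1}_{d-9} \;\oplus\; \sum_{i \geq 2} g_i \cdot (I^{(3)})^{t-1}_{d-\deg g_i} \;\twoheadrightarrow\; I^{(3t)}_d
\]
is surjective, where $g_2, \ldots, g_s$ are the remaining minimal generators of $I^{(3)}$. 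Verifying this would proceed via a Hilbert function comparison: one first determines the minimal generators of $I^{(3)}$ beyond $F$ (the additional ones arising from products in $I \cdot I^{(2)}$), computes $\dim_\field (I^{(3)})^t_d$ and $\dim_\field I^{(3t)}_d$, and checks agreement for $d \geq 12 t$.

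The principal technical challenge is this Hilbert-function bookkeeping in the high-degree regime; the Bezout step merely handles the initial low-degree segment and supplies the inductive engine. Put another way, the lemma is the assertion that the symbolic Rees algebra of $I$ is, in degrees divisible by $3$, generated by its degree-$3$ component, and the rigid combinatorics of the nine-line configuration—three concurrent triples meeting at the coordinate vertices, forming the singular fibres of a cubic pencil—is precisely the feature that should make this dimension count tractable.
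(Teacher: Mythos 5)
Your easy containment and your low-degree B\'ezout step are both sound: if $G\in I^{(3t)}$ is not divisible by a line $\ell$ of the configuration, restriction to $\ell$ forces $\deg G\ge 4\cdot 3t=12t$, so for $\deg G<12t$ you get $F\mid G$ and can divide out. But the regime $\deg G\ge 12t$ is not a side case you can defer --- for large $d$ the space $I^{(3t)}_d$ consists of \emph{all} forms of degree $d$ vanishing to order $3t$ at the twelve points, and your induction on $t$ needs the full statement for $t-1$ in all degrees, not just its low-degree part --- and your proposed treatment of it is circular. Since $G$ and $FH_0$ are homogeneous of the same degree, ``$G-FH_0$ has strictly smaller degree'' is vacuous unless $G=FH_0$; and the multiplication map you write down has image exactly $\bigl((I^{(3)})^t\bigr)_d$, so its surjectivity onto $I^{(3t)}_d$ \emph{is} the assertion being proved in degree $d$, restated rather than established. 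The ``Hilbert-function comparison for all $d\ge 12t$ and all $t$'' is an infinite family of checks for which no method or dimension formula for $\bigl((I^{(3)})^t\bigr)_d$ is given; it is not a finite computation. So the heart of the lemma is asserted, not proved.

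The paper closes exactly this gap by citation: one verifies with Macaulay2 that $\alpha(I^{(3)})=9$, that $I^{(3)}$ has a unique minimal generator in degree $9$ (namely $F$) with all other minimal generators in degree $12$, hence $\beta(I^{(3)})=12$ and $\alpha(I^{(3)})\beta(I^{(3)})=108=3^2\cdot 12=m^2n$ for $m=3$, $n=12$; then \cite[Proposition 3.5]{refHaHu} gives $I^{(3t)}=(I^{(3)})^t$ for all $t\ge 1$. The idea you are missing is the one inside that proposition: equality in the B\'ezout bound $\alpha(I^{(m)})\beta(I^{(m)})\ge m^2n$ is exploited by intersecting two curves in $I^{(3)}$ of degrees $\alpha$ and $\beta$ with no common component (rather than restricting $G$ to the nine lines), which pins down the intersection scheme as exactly the multiplicity-$3$ fat point scheme and yields control of $I^{(3t)}$ uniformly in degree. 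To make your write-up complete, either reproduce that argument or replace your high-degree discussion with the numerical verification plus the citation.
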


\begin{proof}
It is easy to check that this is the radical ideal of the $n=12$ distinct points in $\pr2({\bf C})$
mentioned above. Using {\em Macaulay2} \cite{refM2}, for example, it is easy to check that $\alpha(I^{(3)})=9$
and that $I^{(3)}$ has a unique generator in degree 9, with all of 
its other minimal homogeneous generators being in
degree 12. Thus $\beta(I^{(3)})=12$, so $\alpha(I^{(m)})\beta(I^{(m)})=m^2n$ for $m=3$.
Thus by \cite[Proposition 3.5]{refHaHu} we have $I^{(3t)}=(I^{(3)})^t$ for all $t\geq 1$.
\end{proof}

Lemma \ref{HaHuProp3.5} allows us to apply a computational method (see \cite{refD}) for 
obtaining upper bounds on $\rho(I)$ whenever powers of a certain symbolic 
power are symbolic. In this case, it allows us to
compute $\rho(I)$ to any desired precision.
The method uses the following lemma.

\begin{lem}\label{ADcompMeth}
Let $I=(x_0(x_1^3-x_2^3),x_1(x_0^3-x_2^3),x_2(x_0^3-x_1^3))\subseteq{\bf C}[x_0,x_1,x_2]$. 
Let $r$ be a positive integer, let $i$ be the remainder when dividing $r$ by 4, 
so $r=4t+i$ for some $t\geq0$, and let $j=0$ if $i=0$; otherwise let $j=4-i$.
Then $\frac{m}{r}\geq \frac{6}{4}(1+\frac{j}{r})$ implies $I^{(m)}\subseteq I^r$.
\end{lem}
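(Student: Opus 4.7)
My plan is to reduce the statement, via Lemma \ref{HaHuProp3.5}, to the single containment $I^{(6)} \subseteq I^4$, which then becomes the main obstacle.

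First I would simplify the hypothesis. Rewriting $m/r \geq (6/4)(1 + j/r)$ as $2m \geq 3(r+j)$, and observing that by construction $r+j$ is the smallest multiple of $4$ that is $\geq r$, I set $s = (r+j)/4$, so that the hypothesis becomes $m \geq 6s$ together with $r \leq 4s$. Monotonicity of symbolic powers yields $I^{(m)} \subseteq I^{(6s)}$, and $I^{4s} \subseteq I^r$ since $r \leq 4s$. Thus it suffices to prove $I^{(6s)} \subseteq I^{4s}$.

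Applying Lemma \ref{HaHuProp3.5} with $t = 2s$ gives $I^{(6s)} = (I^{(3)})^{2s}$, and with $t = 2$ it gives $I^{(6)} = (I^{(3)})^2$. Combining these, $I^{(6s)} = \bigl((I^{(3)})^2\bigr)^s = (I^{(6)})^s$. So if the key containment $I^{(6)} \subseteq I^4$ holds, then $(I^{(6)})^s \subseteq (I^4)^s = I^{4s}$, completing the reduction.

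The main obstacle is therefore verifying $I^{(6)} \subseteq I^4$. The degrees are compatible, since $\alpha(I^{(6)}) = 2\alpha(I^{(3)}) = 18$ (using Lemma \ref{HaHuProp3.5}), while $\alpha(I^4) = 16$. The most direct approach is a computer-algebra verification via \cite{refM2}: using the minimal generators of $I^{(3)}$ (one in degree $9$ and the rest in degree $12$, as identified in the proof of Lemma \ref{HaHuProp3.5}), form all pairwise products to obtain a generating set of $I^{(6)} = (I^{(3)})^2$, and check that each belongs to $I^4$. A more conceptual alternative would leverage $I^{(6)} \subseteq I^{(4)}$ together with the fact that $I^{(4)}$ and $I^4$ agree in sufficiently high degrees, reducing the task to bounding the saturation index of $I^4$ by $\alpha(I^{(6)}) = 18$. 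In either approach, once this one finite verification is in hand, the reductions above immediately yield the lemma.
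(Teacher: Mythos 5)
Your proposal is correct and follows essentially the same route as the paper: reduce via Lemma \ref{HaHuProp3.5} to the single Macaulay2-verified containment $I^{(6)}\subseteq I^4$, then use $(I^{(6)})^s\subseteq I^{4s}\subseteq I^r$. The only cosmetic difference is that you unify the paper's two cases ($i=0$ and $0<i\leq 3$) by setting $s=(r+j)/4$, which is a clean repackaging of the same argument.
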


\begin{proof}
Using Macaulay2 we can check that $I^{(6)}\subseteq I^4$.
If $i=0$, then $j=0$ and so $m\geq \frac{6}{4}r\geq6t$, hence $I^{(m)}\subseteq I^{(6t)}$, but $I^{(6t)}=(I^{(6)})^t$
by Lemma \ref{HaHuProp3.5}, so $I^{(m)}\subseteq (I^{(6)})^t\subseteq I^{4t}=I^r$.
Now assume $0<i\leq 3$, so $j=4-i$.
Since $\frac{m}{r}\geq \frac{6}{4}(1+\frac{4-i}{r})$, we have $m\geq \frac{6}{4}(r+4-i)=6(t+1)$
and $r<4(t+1)$.
Thus $I^{(m)}\subseteq (I^{(6)})^{t+1}\subseteq I^{4(t+1)}\subseteq I^r$.
\end{proof}

\begin{cor}\label{rhoKrakowExample}
Let $I=(x_0(x_1^3-x_2^3),x_1(x_0^3-x_2^3),x_2(x_0^3-x_1^3))\subseteq{\bf C}[x_0,x_1,x_2]$. Then 
$\frac{3}{2}\leq\rho(I)\leq\frac{8}{5}=1.6$.
\end{cor}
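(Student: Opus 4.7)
The lower bound $\rho(I)\geq 3/2$ is immediate: by \cite{refDST}, $I^{(3)}\not\subseteq I^2$, which produces a non-containment ratio of $3/2$.

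For the upper bound, the plan is to verify $I^{(m)}\subseteq I^r$ for every pair $(m,r)$ with $m/r>8/5$. First, Lemma \ref{ADcompMeth} provides the sufficient condition $m/r\geq \frac{6}{4}(1+j/r)$, and a direct rearrangement shows this is implied by $m/r>8/5$ precisely when $r\geq 15j$; thus the lemma alone dispatches all $r\geq 45$ as well as every $r\equiv 0\pmod 4$.

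Second, for the intermediate range, I would refine the bound by combining Lemma \ref{HaHuProp3.5} (giving $I^{(3t)}=(I^{(3)})^t$) with the key containment $I^{(6)}\subseteq I^4$ underlying Lemma \ref{ADcompMeth} and with the trivial $I^{(3)}\subseteq I$. Iterating yields $(I^{(3)})^{2k}\subseteq I^{4k}$ and $(I^{(3)})^{2k+1}\subseteq I^{4k}\cdot I = I^{4k+1}$ for every $k\geq 0$. A case analysis by $r\bmod 4$ then confirms the non-containment ratio is bounded by $8/5$ in each residue class once $r$ is moderately large: for $r\equiv 1\pmod 4$ with $r\geq 5$ one checks $(6k+2)/(4k+1)\leq 8/5$, which is equivalent to $k\geq 1$; for $r\equiv 3\pmod 4$ with $r\geq 7$ one checks $(6k+5)/(4k+3)\leq 8/5$, equivalent to $k\geq 1$; and for $r\equiv 2\pmod 4$ the derived bound $(6k+5)/(4k+2)\leq 8/5$ requires $k\geq 5$, i.e., $r\geq 22$. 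The case $r=2$ is handled by the ELS bound $I^{(4)}\subseteq I^2$.

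This leaves only the finitely many small exceptional values $r=3$ and $r\in\{6,10,14,18\}$, for which the required containments — namely $I^{(5)}\subseteq I^3$, $I^{(10)}\subseteq I^6$, $I^{(17)}\subseteq I^{10}$, $I^{(23)}\subseteq I^{14}$, and $I^{(29)}\subseteq I^{18}$ — I would verify directly in \emph{Macaulay2}, following the algorithmic recipe of \cite{refD}. The main obstacle is precisely this residual computational check: no clean symbolic-power identity governs $I^{(m)}$ when $m$ is not a multiple of $3$, so one cannot deduce these inclusions from $(I^{(3)})^t$-manipulations alone. The bound $8/5$ emerges because these small cases are just barely tight — the case $r=5$, for example, forces the ratio $8/5$ since $I^{(9)}=(I^{(3)})^3\subseteq I^4\cdot I=I^5$ but one expects $I^{(8)}\not\subseteq I^5$.
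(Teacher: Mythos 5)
Your argument is correct and establishes both bounds, and at its core it is the same strategy as the paper's: get $3/2\leq\rho(I)$ from $I^{(3)}\not\subseteq I^2$, and get $\rho(I)\leq 8/5$ by using Lemma \ref{ADcompMeth} to reduce the upper bound to a finite list of containments that must be verified in \emph{Macaulay2}. Where you genuinely diverge is in how much you squeeze out of Lemma \ref{HaHuProp3.5} before resorting to computation. The paper applies Lemma \ref{ADcompMeth} as a black box, enumerates all pairs $(r,m)$ with $\frac{8}{5}\leq\frac{m}{r}<\frac{6}{4}(1+\frac{j}{r})$, and checks $14$ pairs by machine. You instead note that $(I^{(3)})^{2k}\subseteq I^{4k}$ and $(I^{(3)})^{2k+1}\subseteq I^{4k+1}$ give sharper thresholds in the residue classes $r\equiv 1,3\pmod 4$, which eliminates all of the paper's checks $(5,8)$, $(9,15)$, $(13,21)$, $(17,28)$, $(17,29)$, $(21,34)$, $(25,40)$, $(29,47)$, $(33,53)$ by pure arithmetic, leaving only $r=3$ and $r\in\{6,10,14,18\}$ for the computer. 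Your five residual checks match the paper's surviving pairs up to your use of strict inequality ($(10,17)$ versus the paper's $(10,16)$); either suffices since $\rho$ is a supremum. This buys a substantial reduction in computation at the cost of a slightly longer case analysis, and both proofs ultimately rest on the same two machine-verified inputs, $I^{(6)}\subseteq I^4$ and the residual list.

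Two small cautions. First, your phrase ``precisely when $r\geq 15j$'' overstates the arithmetic: $r\geq 15j$ is sufficient for $m/r>8/5$ to force the lemma's hypothesis, but not necessary, since for smaller $r$ there may be no integer $m$ in the gap. Second, your closing speculation that ``one expects $I^{(8)}\not\subseteq I^5$'' is contradicted by the paper's own computation: the pair $(5,8)$ is on the paper's verified list, so $I^{(8)}\subseteq I^5$ does hold. The threshold $8/5$ is not forced by any known non-containment; it is simply a convenient rational number above $3/2$ at which the finite verification becomes tractable, and the true resurgence is only known to lie in $[\frac{3}{2},\frac{8}{5}]$. Neither point affects the validity of your proof of the stated bounds.
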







\begin{proof}
We get the lower bound $\frac{3}{2}\leq\rho(I)$ from the fact that $I^{(3)}\not\subseteq I^2$
\cite{refDST}, easily verified by Macaulay2.
For the upper bound we apply Lemma \ref{ADcompMeth}. 
Suppose that $\rho(I)>\frac{a}{b}$ for some positive integers $a$ and $b$. 
Then $\frac{a}{b}\leq\frac{m}{r}<\frac{6}{4}(1+\frac{j}{r})$ for some integers $m$ and $r$ such that
$I^{(m)}\not\subseteq I^r$. But if $\frac{a}{b}>\frac{6}{4}$, there are only finitely many $r$ for which this is possible.
(To see this write $\frac{a}{b}=\frac{6}{4}+\epsilon$ for some $\epsilon>0$.
Then we have $\frac{6}{4}+\epsilon\leq\frac{m}{r}<\frac{6}{4}(1+\frac{j}{r})\leq \frac{6}{4}(1+\frac{3}{r})$, 
but this is possible only for $r<\frac{9}{2\epsilon}$.)
For each $\frac{a}{b}>\frac{6}{4}$ 
one makes a list of all $m$ and $r$ with $\frac{a}{b}\leq\frac{m}{r}<\frac{6}{4}(1+\frac{j}{r})$
and tests whether $I^{(m)}\subseteq I^r$. With $a=8$ and $b=5$, the pairs $(r,m)$ that must be checked (ignoring
cases with $m/r\geq 2$ for which containment follows from \cite{refELS, refHoHu}, and ignoring cases
$(r,m')$ if $I^{(m)}\subseteq I^r$ for some $m<m'$) are:
(3, 5), (5, 8), (6, 10), (9, 15), (10, 16), (13, 21), (14, 23), (17, 28), (17, 29), (18, 29),
(21, 34), (25, 40), (29, 47) and (33, 53). Macaulay2 confirms containment for each of these cases,
so we obtain the bound $\rho(I)\leq\frac{8}{5}=1.6$.
\end{proof}

Now we consider the characteristic 3 counterexample given in \cite{refBCH}.
The following lemma is the same as before, with the same proof.

\begin{lem}\label{AgainHaHuProp3.5}
Let $\operatorname{char}(\field)=3$ and let 
$I=(x_0x_1(x_0^2-x_1^2), x_0x_2(x_0^2-x_2^2), x_1x_2(x_1^2-x_2^2), x_0(x_0^2-x_1^2)(x_0^2-x_2^2))
\subseteq\field[x_0,x_1,x_2]$. Then 
$I^{(3t)}=(I^{(3)})^t$ for all $t\geq 1$.
\end{lem}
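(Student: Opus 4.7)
The plan is to mirror the proof of Lemma~\ref{HaHuProp3.5} step by step, the key tool again being \cite[Proposition 3.5]{refHaHu}: if $I$ is the radical ideal of $n$ points in $\pr2$ and $\alpha(I^{(m)})\beta(I^{(m)})=m^2n$ for some $m$, then $(I^{(m)})^t=I^{(mt)}$ for all $t\geq 1$. So the task reduces to producing the invariants $\alpha(I^{(3)})$ and $\beta(I^{(3)})$ and verifying the product formula with $m=3$ and $n=12$.

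First I would confirm that $I$ is the radical ideal of the $n=12$ distinct points described in the introduction (the 12 of the 13 $\mathbb{Z}/3\mathbb{Z}$-points in $\pr2(\field)$ obtained by excluding $q=[1:0:0]$), which is a direct verification in \emph{Macaulay2} over a prime field of characteristic~3. Next, since $F=F_1F_2F_3$ has degree $9$ and lies in $I^{(3)}$ (each of the 12 points has multiplicity at least 3 on the union of the nine lines), we already have $\alpha(I^{(3)})\leq 9$; a \emph{Macaulay2} computation in characteristic 3 should confirm equality and moreover show that $F$ is the unique minimal generator in that degree while all remaining minimal generators of $I^{(3)}$ occur in degree~12. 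This yields $\beta(I^{(3)})=12$ (no linear form divides every element in any larger graded piece whose base locus is zero-dimensional), and then $\alpha(I^{(3)})\beta(I^{(3)})=9\cdot 12=108=3^2\cdot 12=m^2n$.

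With the numerical hypothesis in hand, invoking \cite[Proposition 3.5]{refHaHu} immediately produces $(I^{(3)})^t=I^{(3t)}$ for all $t\geq 1$, completing the proof.

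The main obstacle I anticipate is the computational one: the ideal is defined over a field of characteristic~3, where \emph{Macaulay2} Gr\"obner basis arithmetic differs from the characteristic-zero computation used in Lemma~\ref{HaHuProp3.5}, and one has to be certain that the symbolic power $I^{(3)}=\bigcap_i I(p_i)^3$ is computed as an honest intersection of the 12 local ideals rather than via some characteristic-zero shortcut. Once the degrees of the minimal generators of $I^{(3)}$ are verified (degree 9 with multiplicity one, and degree 12), the rest is arithmetic and a direct appeal to \cite[Proposition 3.5]{refHaHu}, which itself places no hypothesis on the characteristic.
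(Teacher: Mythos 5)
Your proposal is correct and follows exactly the route the paper takes: the paper proves this lemma by declaring it ``the same as before, with the same proof,'' i.e.\ a Macaulay2 verification (now in characteristic 3) that $\alpha(I^{(3)})=9$ with a unique degree-$9$ generator and all other minimal generators in degree $12$, so $\beta(I^{(3)})=12$ and $\alpha(I^{(3)})\beta(I^{(3)})=108=3^2\cdot 12$, followed by an appeal to \cite[Proposition 3.5]{refHaHu}. Your attention to doing the computation honestly over a field of characteristic 3, with $I^{(3)}$ computed as the intersection of the twelve cubed point ideals, is exactly the right caution and matches the paper's intent.
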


However, now we have:

\begin{prop}\label{rhoProp}
Let $\operatorname{char}(\field)=3$ and let 
$I=(x_0x_1(x_0^2-x_1^2), x_0x_2(x_0^2-x_2^2), x_1x_2(x_1^2-x_2^2), x_0(x_0^2-x_1^2)(x_0^2-x_2^2))
\subseteq\field[x_0,x_1,x_2]$. Then $\rho(I)=\frac{5}{3}$.
\end{prop}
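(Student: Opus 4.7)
The plan is to establish $\rho(I)\geq 5/3$ by a direct construction and $\rho(I)\leq 5/3$ by an argument parallel to Corollary~\ref{rhoKrakowExample}.

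For the lower bound, I would use the form $F=F_1F_2F_3\in I^{(3)}$ discussed in the introduction: it has degree $9$, lies in $I^{(3)}$, and satisfies $F(q)\neq 0$ where $q=[1:0:0]$. By Lemma~\ref{AgainHaHuProp3.5}, $F^t\in(I^{(3)})^t=I^{(3t)}$ for every $t\geq 1$, with $\deg F^t=9t$ and $F^t(q)\neq 0$. As observed in the introduction, every element of $I^r$ not vanishing at $q$ has degree at least $5r$, since the only generator of $I$ not vanishing at $q$ is the quintic generator. Hence $F^t\notin I^r$ whenever $9t<5r$. Taking $r=\lfloor 9t/5\rfloor+1$ produces non-containments $I^{(3t)}\not\subseteq I^r$ with $m/r=3t/r$ approaching $5/3$ from below as $t\to\infty$; therefore $\rho(I)\geq 5/3$.

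For the upper bound, I would follow the template of Corollary~\ref{rhoKrakowExample}. The first step is a Macaulay2 verification of a suitable base containment, for example $I^{(15)}\subseteq I^9$. Since $I^{(15)}=(I^{(3)})^5$ by Lemma~\ref{AgainHaHuProp3.5}, and the same lemma gives $I^{(15t)}=(I^{(15)})^t$, one obtains $I^{(15t)}\subseteq I^{9t}$ for every $t\geq 1$. Combined with $I^{(m)}\subseteq I^{(15t)}$ (for $m\geq 15t$) and $I^{9t}\subseteq I^r$ (for $r\leq 9t$), this yields an analogue of Lemma~\ref{ADcompMeth}: $m/r\geq (5/3)(1+j/r)$ forces $I^{(m)}\subseteq I^r$, where $j$ depends on $r\bmod 9$. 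For any $a/b>5/3$, the pairs $(m,r)$ with $a/b\leq m/r<(5/3)(1+j/r)$ form a finite list, each of which can be verified individually in Macaulay2. Letting $a/b\to 5/3$ yields $\rho(I)\leq 5/3$.

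The main obstacle is obtaining the tight upper bound. Corollary~\ref{rhoKrakowExample} settles for $\rho\leq 8/5$; here we need exactly $5/3$, so the base containment and the supplementary finite Macaulay2 case-checks must be chosen so that the ADcompMeth-style argument pins the resurgence to the correct value. A cleaner alternative would be to prove directly that $F^t$ is essentially the only obstruction — that is, every element of $I^{(m)}$ of degree below $5r$ vanishes at $q$ whenever $m/r>5/3$ — but such a structural statement seems to require a detailed analysis of $I^{(m)}$ in each degree.
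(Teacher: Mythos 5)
Your lower bound is essentially the paper's argument: the paper takes $F^{5t}\in(I^{(15t)})_{45t}$, notes that every homogeneous element of $(I^r)_i$ with $i<5r$ vanishes at $q=[1:0:0]$ (only the quintic generator of $I$ survives at $q$), and concludes $I^{(15t)}\not\subseteq I^{9t+1}$, whence $\rho(I)\geq 15t/(9t+1)\to 5/3$. Your version with $F^{t}$ and $r=\lfloor 9t/5\rfloor+1$ is the same degree count and is correct.

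The upper bound is where there is a genuine gap. The bootstrap method of Lemma~\ref{ADcompMeth} cannot certify the exact value $5/3$: from a base containment $I^{(15)}\subseteq I^{9}$ (itself unverified, and delicate, since $\deg F^5=45=5\cdot 9$ sits exactly at the degree threshold so the non-containment argument no longer applies there) one absorbs only pairs with $m\geq 15\lceil r/9\rceil$, and for every $t$ the pair $(m,r)=(15t+2,\,9t+1)$ satisfies $m/r>5/3$ yet $m<15(t+1)$, so it is never absorbed. Hence ``letting $a/b\to 5/3$'' requires infinitely many separate Macaulay2 verifications; after any finite computation you obtain only $\rho(I)\leq a/b$ for some $a/b$ strictly larger than $5/3$, which is exactly why Corollary~\ref{rhoKrakowExample} stops at $8/5>6/4$. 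The paper closes the gap with a different tool: the bound $\rho(I)\leq\operatorname{reg}(I)/\gamma(I)$ from \cite{refBH}, where $\gamma(I)=\lim_{m\to\infty}\alpha(I^{(m)})/m$ is Waldschmidt's constant. Lemma~\ref{AgainHaHuProp3.5} gives $\gamma(I)=\alpha(I^{(3)})/3=3$, Macaulay2 gives $\operatorname{reg}(I)=5$, and so $\rho(I)\leq 5/3$ exactly. You should replace your upper-bound strategy with this regularity/Waldschmidt bound; the finite-check method is structurally incapable of reaching the sharp value.
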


\begin{proof}
Waldschmidt's constant, defined as $\gamma(I)=\lim_{m\to\infty}\frac{\alpha(I^{(m)})}{m}$, is known to exist
(see \cite{refBH}). Using Lemma \ref{AgainHaHuProp3.5} we have 
$\gamma(I)=\lim_{3m\to\infty}\frac{\alpha(I^{(3m)})}{3m}=\lim_{3m\to\infty}\frac{\alpha((I^{(3)})^m)}{3m}=
\lim_{3m\to\infty}\frac{m\alpha(I^{(3)})}{3m}=\frac{\alpha(I^{(3)})}{3}$, which from Macaulay2 we compute
to be $\gamma(I)=3$. We also find from Macaulay2 that $\operatorname{reg}(I)=5$.
We now have the upper bound $\rho(I)\leq\frac{\operatorname{reg}(I)}{\gamma(I)}=\frac{5}{3}$ from \cite{refBH}.

To prove equality, we show that $I^{(15t)}\not\subseteq I^{9t+1}$ for all $t\geq1$.
We easily check that only the quintic generator of $I$ does not vanish at $q=[1:0:0]$.
Thus every homogeneous element of $(I^r)_i$ with $i<5r$ vanishes at $q$.
But there are 9 lines of $\pr2(\field)$ defined over the prime field ${\bf Z}/3{\bf Z}$ 
which do not contain $q$; let $F$ be 
the product of their defining forms. Note that exactly three of the linear factors
of $F$ vanish at each of the twelve ${\bf Z}/3{\bf Z}$-points of $\pr2(\field)$ other than $q$. 
Thus $F^{5t}\in (I^{(15t)})_{45t}$. Since $F^{5t}$ does not vanish
at $q$ and has degree less than $5r$ for $r=9t+1$, we have $I^{(15t)}\not\subseteq I^{9t+1}$.
Thus $\rho(I)\geq\frac{15t}{9t+1}$ for all $t\geq1$, and therefore $\rho(I)\geq\frac{15}{9}=\frac{5}{3}$.
\end{proof}

\section{Additional Counterexamples in $\pr 2$}\label{addcntrex}

In the case that $\field$ is the field of complex numbers, Proposition \ref{moreexs}
was observed by \cite{refDST}, with an explicit proof in the case that $j=3$.
Our proof runs along the same lines. 
Note that the hypothesis that $\field$ contains $j$
distinct roots of 1 is equivalent when $\operatorname{char}(\field)=p>0$
to $\field$ containing a finite field of order $p^s$ for some $s\geq 1$
with $p^s-1$ divisible by $j$.

\begin{prop}\label{moreexs}
Let $I=(x_0(x_1^j-x_2^j), x_1(x_0^j-x_2^j),x_2(x_0^j-x_1^j))\subseteq \field[x_0,x_1,x_2]$
for any $j\geq3$ and any field $\field$ of characteristic not equal to 2 containing $j$
distinct roots of 1. Then $I^{(3)}\not\subseteq I^2$. 
\end{prop}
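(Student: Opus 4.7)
The plan is to take $F = (x_0^j-x_1^j)(x_1^j-x_2^j)(x_0^j-x_2^j)$ and show that $F \in I^{(3)} \setminus I^2$.

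The containment $F \in I^{(3)}$ is a point-by-point check. The variety $V(I)$ consists of the three coordinate vertices together with the $j^2$ points of the complete intersection $x_0^j = x_1^j = x_2^j$. At a coordinate vertex, say $[1:0:0]$, only the factor $x_1^j - x_2^j = \prod_k (x_1 - \zeta^k x_2)$ vanishes, but it does so with multiplicity $j \geq 3$ since it cuts out $j$ concurrent lines through the point. At each complete intersection point all three factors vanish simply, for total multiplicity $3$. Hence $F \in I^{(3)}$.

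For $F \notin I^2$ I would use the natural $(\mathbf{Z}/j\mathbf{Z})^3$-grading on $\field[x_0,x_1,x_2]$ in which the monomial $x_0^{a_0}x_1^{a_1}x_2^{a_2}$ has grade $(a_0,a_1,a_2)\bmod j$; this is just a regrouping of monomials and needs no characteristic hypothesis. Name the three generators of $I$ as $g_0 = x_0(x_1^j-x_2^j)$, $g_1 = x_1(x_0^j-x_2^j)$, $g_2 = x_2(x_0^j-x_1^j)$. Each $g_i$ is homogeneous of grade $e_i$, the $i$-th standard basis vector, so $g_i g_k$ has grade $e_i + e_k$, while $F \in \field[x_0^j,x_1^j,x_2^j]$ is homogeneous of grade $(0,0,0)$.

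Now assume $F = \sum_{i,k} h_{ik} g_i g_k \in I^2$ with each $h_{ik}$ homogeneous of (usual) degree $\deg h_{ik} = 3j - 2(j+1) = j-2$. Extracting the grade-$(0,0,0)$ component on the right retains only the $-(e_i+e_k)$-graded summands of each $h_{ik}$. For $i \neq k$, any such monomial of $h_{ik}$ has exponent at least $j-1$ in each of $x_i$ and $x_k$, hence degree at least $2j-2 > j-2$, so its contribution vanishes. For $i = k$, the only monomial of degree $j-2$ and grade $-2e_i$ is $x_i^{j-2}$, giving a contribution $\lambda_i x_i^{j-2} g_i^2$ for some $\lambda_i \in \field$. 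The assumption $F \in I^2$ thus collapses to the polynomial identity
\begin{equation*}
(X-Y)(Y-Z)(X-Z) = \lambda_0 X(Y-Z)^2 + \lambda_1 Y(X-Z)^2 + \lambda_2 Z(X-Y)^2
\end{equation*}
in $\field[X,Y,Z]$ with $X = x_0^j$, $Y = x_1^j$, $Z = x_2^j$. Comparing the coefficients of $XY^2$ and $XZ^2$ on both sides yields $\lambda_0 = -1$ and $\lambda_0 = +1$, a contradiction as long as $\operatorname{char}(\field)\neq 2$.

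The one delicate step is the grading/degree bookkeeping that cuts the possible $h_{ik}$ down to the diagonal one-parameter family; the rest is routine polynomial expansion, and the characteristic hypothesis is used exactly once, at the very end.
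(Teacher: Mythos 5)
Your argument is correct, and for the key step it takes a genuinely different route from the paper. For $F\notin I^2$ the paper also writes $F=\sum P_if_i^2+\sum Q_{ik}f_if_k$ with coefficients of degree $j-2$, but then reduces modulo $x_0$ and modulo $x_1$ and compares the coefficient of a specific monomial ($x_2^{j-2}$ in $P_3$), finding it must be both $1$ and $-1$; your $({\bf Z}/j{\bf Z})^3$-grading does the same pruning more structurally: it kills all mixed terms $h_{ik}g_ig_k$ ($i\neq k$) at once and collapses the diagonal terms to the three scalars $\lambda_i$, reducing everything to the transparent identity $(X-Y)(Y-Z)(X-Z)=\lambda_0X(Y-Z)^2+\lambda_1Y(X-Z)^2+\lambda_2Z(X-Y)^2$ in $\field[X,Y,Z]$, where comparing $XY^2$ and $XZ^2$ gives the same $\pm1$ clash in characteristic $\neq 2$. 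What your write-up elides, and the paper does verify, is the front end: that $V(I)$ is exactly the $j^2+3$ claimed points and, more importantly, that $I$ is radical (the paper checks the generators cut out the maximal ideal locally at each point); you need this, or at least local reducedness at each point of $V(I)$, to justify the step ``$F$ vanishes to order $\geq 3$ at every point of $V(I)$, hence $F\in I^{(3)}$,'' since for a non-radical $I$ the symbolic power is not characterized by orders of vanishing. That check is routine here, so it is an easily filled omission rather than a flaw, and your multigraded argument for non-containment is, if anything, cleaner and makes the single use of the characteristic hypothesis more visible.
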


\begin{proof}
The proof is similar to the one outlined in \cite{refDST}, which addresses the case $j=3$. 
Note that $I$ is the reduced ideal of a set of $j^2+3$ points in $\pr{2}$. More precisely, 
$V(I)$ is the union of two varieties $X$ and $Y$, where $X$ consists of all points having 
each coordinate equal to a $j^{th}$ root of 1 and $Y$ is the the set of points $[0:0:1], [0:1:0]$ and $[1:0:0]$. 
(To see this, note that $(x_0(x_1^j-x_2^j), x_1(x_0^j-x_2^j))$ vanishes only at the aforementioned
$j^2+3$ points, plus $j$ points on $x_0=0$ and $j$ points on $x_1=0$, but these last $2j$ points
do not lie in the zero locus of $x_2(x_0^j-x_1^j)$, whereas the other $j^2+3$ points do. 
Thus the radical of $I$ vanishes at exactly the claimed $j^2+3$ points. It is easy to check after localizing 
at any of the $j^2+3$ points that the three generators generate the localization of the ideal of the point.
Thus $I$ is radical and is thus the ideal of the $j^2+3$ points, as claimed.)

Set $F=(x_1^{j}-x_2^{j})(x_0^{j}-x_2^{j})(x_0^{j}-x_1^{j})$. We will show that 
$F\in I^{(3)}\setminus I^2$. Indeed, it is easy to see that $V(F)$ is the union of $3j$ 
lines such that exactly 3 of these lines pass through each point in $X$ and exactly $j$ 
of the lines pass through each point in $Y$. Since $j\geq 3$, we have that $F\in I^{(3)}$. Assume  
$$F=\sum_{i=1}^3 P_if_i^2+\sum_{1\leq i <j \leq 3}Q_{ij}f_if_j$$ 
where $f_1=x_0(x_1^j-x_2^j), f_2=x_1(x_0^j-x_2^j), f_3=x_2(x_0^j-x_1^j)$ are the generators of 
$I$ and $P_i,Q_{ij}$ are homogeneous polynomials  of degree $\deg(F)-(2j+2)=j-2$.  Going modulo  each 
of the variables $x_0$  and $x_1$ in the expression for $F$ displayed above, one obtains respectively
$$
\begin{matrix}
x_1^{j}x_2^{j}(x_1^{j}-x_2^{j}) &= &x_1^2x_2^{2j}\overline{P}_2+x_2^2x_1^{2j}\overline{P}_3+x_1^{j+1}x_2^{j+1}\overline{Q}_{23} \\
-x_0^{j}x_2^{j}(x_0^{j}-x_2^{j}) & =& x_0^2x_2^{2j}\widetilde{P}_1+x_2^2x_0^{2j}\widetilde{P}_3-x_0^{j+1}x_2^{j+1}\widetilde{Q}_{13},\\
\end{matrix}
$$
where the bar denotes residue classes modulo $(x_0)$ and $\widetilde{\ }$ denotes residue classes modulo $(x_1)$.

Comparing the degree of $x_2$ in each term of the first equation, one sees that the monomial 
$x_1^{2j}x_2^{j}$ on the left  must arise as the product of $x_2^2x_1^{2j}$ and a term of the 
form $x_2^{j-1}$ appearing in $\overline{P}_3$, hence also in $P_3$. Hence the coefficient 
of the monomial  $x_2^{j-1}$ in the expression of $P_3$ must be $1$. Similarly, the monomial 
$-x_0^{2j}x_2^{j}$ in the second equation forces $P_3$ to contain a term of the form $-x_2^{j-2}$.  
Since the characteristic of the field $\field$ is not two, this yields a contradiction, as the coefficient 
of the monomial $x_2^{j-1}$ in $P_3$ cannot simultaneously be $1$ and $-1$.
\end{proof}

We now obtain positive characteristic counterexamples to $I^{(2r-1)}\subseteq I^r$ for 
infinitely many $r$ for each characteristic $p>2$.

\begin{prop}\label{counterexP2charp}
Let $\operatorname{char}(\field)=p>2$. Let $\field'\subseteq \field$ be a finite field of order $p^s$ for some $s\geq 1$.
Let $I$ be the ideal of all of the $\field'$-points of $\pr2(\field)$ but one, which we may assume is $q=[0:1:0]$.
If $r=(p^s+1)/2$ (and hence $2r-1=p^s$), then $I^{(2r-1)}\not\subseteq I^r$. 
\end{prop}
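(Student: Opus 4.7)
The plan follows the strategy outlined in the introduction: choose $t = p^{2s}$, exhibit a form $F \in (I^{(2r-1)})_t = (I^{(p^s)})_t$ with $F(q) \neq 0$, and show that every element of $(I^r)_t$ vanishes at $q$, whence $(I^{(2r-1)})_t \not\subseteq (I^r)_t$. For $F$, I would take the product of the $p^{2s}$ $\field'$-lines of $\pr{2}$ not passing through $q$ (there being $p^{2s}+p^s+1$ $\field'$-lines in total, $p^s+1$ of which contain $q$). Any point $P \in V(I) = \pr{2}(\field') \setminus \{q\}$ lies on $p^s + 1$ $\field'$-lines, exactly one of which (the line $\overline{Pq}$) passes through $q$; hence $P$ lies on $p^s$ of the lines comprising $F$, so $F$ vanishes to order at least $p^s = 2r - 1$ at $P$, placing $F$ in $I^{(2r-1)}$. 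Moreover $\deg F = p^{2s} = t$ and no linear factor of $F$ vanishes at $q$, so $F(q) \neq 0$.

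To handle $(I^r)_t$, I would use the bound $\alpha_q(I) \geq 2p^s - 1$, where $\alpha_q(I)$ is the least degree of a form in $I$ not vanishing at $q$. Given this, any $G \in (I^r)_t$ may be written $G = \sum_k g_{k,1}\cdots g_{k,r}$ with $g_{k,i} \in I$ homogeneous and $\sum_i \deg g_{k,i} = t$. Since $r(2p^s - 1) = (p^s+1)(2p^s-1)/2 = p^{2s} + (p^s-1)/2 > p^{2s}$ (using $p^s \geq 3$), each summand must contain some factor $g_{k,i}$ of degree less than $2p^s - 1 \le \alpha_q(I)$; this factor vanishes at $q$, so each product vanishes at $q$, whence $G(q) = 0$, completing the argument.

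The main obstacle is establishing $\alpha_q(I) \geq 2p^s - 1$. Suppose for contradiction $f \in I$ has degree $d \leq 2p^s - 2$ with $f(q) \neq 0$. Parameterize the line through $q$ labeled by $[a:c] \in \pr{1}(\field')$ as $[u:v] \mapsto [ua:v:uc]$, so that $q$ corresponds to $[u:v] = [0:1]$, and expand $f(a,v,c) = \sum_{j=0}^{d} F_j(a,c)\, v^j$, where $F_j(a,c)$ is homogeneous of degree $d - j$ in $(a,c)$ and $F_d = f(q)$ is a nonzero constant. The vanishing of $f$ at the $p^s$ non-$q$ $\field'$-points of each such line forces $v^{p^s} - v$ to divide $f(a,v,c)$ in $\field[v]$ for every $[a:c] \in \pr{1}(\field')$. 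Reducing $f(a,v,c)$ modulo $v^{p^s} - v$ (using $v^j \equiv v^{j-p^s+1}$ for $j \in [p^s,2p^s-2]$) and examining the coefficient of $v^{d-p^s+1}$ yields the relation $F_{d-p^s+1}(a,c) + f(q) = 0$ for each $(a,c)$ representing a point of $\pr{1}(\field')$. For the $p^s$ representatives $(1,\beta)$ with $\beta \in \field'$, the polynomial $F_{d-p^s+1}(1,\beta)$ has degree at most $p^s - 1$ in $\beta$ and equals the constant $-f(q)$ at $p^s$ distinct values, so it equals $-f(q)$ identically, forcing $F_{d-p^s+1}(a,c) = -f(q)\,a^{p^s-1}$. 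Evaluating at the representative $(0,1)$ of $[0:1]$ then gives $F_{d-p^s+1}(0,1) = 0$, contradicting $F_{d-p^s+1}(0,1) = -f(q) \neq 0$. (The easier subcase $d < p^s$: the restriction of $f$ to any line through $q$ would be a form of degree $< p^s$ with $p^s$ distinct zeros, hence identically zero, forcing $f(q) = 0$.)
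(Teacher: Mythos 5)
Your proposal is correct, and its outer skeleton is the same as the paper's: the witness is the identical form (the product of the $p^{2s}$ $\field'$-lines avoiding $q$, which lies in $I^{(p^s)}$, has degree $p^{2s}$, and does not vanish at $q$), and the failure of containment comes from the same numerical comparison $r(2p^s-1)=p^{2s}+(p^s-1)/2>p^{2s}$ combined with the key bound that every form in $I$ of degree at most $2p^s-2$ vanishes at $q$. Where you genuinely diverge is in the proof of that key bound. The paper uses the structure of the point scheme: it writes $I=I_U\cap I_W$, where $U$ is the affine grid of $p^{2s}$ points and $W$ the $p^s$ points at infinity other than $q$, both complete intersections, so any $A\in I$ satisfies $A=gG+hH=ax_2+fF$; reducing modulo $x_2$ and using that the syzygies of $x_0^{p^s-1},x_1^{p^s}$ are Koszul forces $\deg A\geq 2p^s-1$ whenever $A(q)\neq0$, with $FG/x_1$ showing the bound is sharp. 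You instead restrict a putative $f\in I$ of degree $d\leq 2p^s-2$ with $f(q)\neq0$ to the pencil of $\field'$-lines through $q$: vanishing at the $p^s$ non-$q$ points of each line makes $v^{p^s}-v$ divide $f(a,v,c)$, and comparing the coefficient of $v^{d-p^s+1}$ in the remainder yields $F_{d-p^s+1}(a,c)=-f(q)$ for all $(a,c)\in(\field')^2\setminus\{(0,0)\}$, which interpolation on the chart $a=1$ and evaluation at $(0,1)$ show is incompatible with $F_{d-p^s+1}$ being homogeneous of degree $p^s-1$ unless $f(q)=0$ (the case $d<p^s$ being immediate). Your route is more elementary and self-contained, using only divisibility by $v^{p^s}-v$ and counting roots, at the cost of giving only the inequality $\geq 2p^s-1$ (which is all that is needed), whereas the paper's route exposes the ideal-theoretic decomposition of $I$ and the exact value $2p^s-1$. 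You also make explicit the step, left implicit in the paper, that a homogeneous element of $(I^r)_{p^{2s}}$ is a sum of products of $r$ homogeneous elements of $I$, at least one factor of which must have degree below $2p^s-1$ and hence vanish at $q$.
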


\begin{proof}
Let $J$ be the ideal of all $\field'$-points
of $\pr2(\field)$. First, we show that $\alpha(I)=\alpha(J) = p^s+1$. 
If $B\in I$ is homogeneous of degree $p^s$, every line in $\pr2(\field')$ not 
through $q$ meets $V(B)$ in $p^s+1$ points and thus B\'ezout's Theorem implies that 
every one of the $p^{2s}$ such lines is a factor of $B$.
This contradicts the assumption that the degree of $B$ is $p^s$, thus 
$\alpha(I)\geq p^s+1$ and, since $J\subset I$, also $\alpha(J)\geq p^s+1$.
Moreover, if $D$ is the product of all of the $p^s+1$ lines through any given point of $\pr2(\field')$, 
then $D$ vanishes at all points of $\pr2(\field')$ so $D\in J\subset I$. Hence
$\alpha(I) = \alpha(J) = p^s+1$.

Next we claim that 
$q$ is in the zero locus of $I_t$ for degrees $t < 2p^s-1$. To prove this, note
that I defines a point scheme X, which is the union of
two complete intersections: the $p^{2s}$ points $U= \pr2(\field')\setminus V(x_2)$ not at infinity
and the $p^s$ points $W=(\pr2(\field')\cap V(x_2))\setminus\{q\}$ of $\pr2(\field')$ at
infinity other than $q$. We have $I_U=(G,H)$
(where $G=\Pi_{\zeta\in\field'}(x_1-\zeta x_2)$ is the product of the $p^s$ forms other than $x_2$ defining lines
through $[1:0:0]$ and $H=\Pi_{\zeta\in \field'}(x_0-\zeta x_2)$ is the 
product of the $p^s$ forms other than $x_2$ defining lines through $[0:1:0]$)
and $I_W=(x_2,F)$ (where $F=\Pi_{\zeta\in\field'}(x_1-\zeta x_0)$ is the product of 
the $p^s$ forms other than $x_0$ defining lines through $[0:0:1]$). Since $I = I_U \cap I_W$,
we have that $A\in I$ if and only if $A = gG+hH = ax_2+fF$ for some $a,f,g,h\in k[\pr2(\field)]=\field[x_0,x_1,x_2]$. 

Suppose that $A$ does not vanish at $q$. Thus $x_2$ does not divide $A$, hence
$A$ and therefore $f$ are not 0 modulo $x_2$.
Modulo $x_2$, the relation $A = gG+hH = ax_2+fF$
yields  $\overline{g}\overline{G}+\overline{h}\overline{H}=\overline{f}\overline{F}$, where the bar 
denotes images in $\field[\pr{2}]/(x_2)=\field[x_0,x_1]$.
In the quotient ring $\field[x_0,x_1]$ one has that 
$\overline{F}=\Pi_{\zeta\in\field'}(x_1-\zeta x_0)=x_1(x_1^{p^s-1}-x_0^{p^s-1})$,
$\overline{G}=x_1^{p^s}$ and $\overline{H}=x_0^{p^s}$. 
Any solution $\overline{f},\overline{g},\overline{h}$ of $\overline{g}\overline{G}+\overline{h}\overline{H}=\overline{f}\overline{F}$ satisfies 
$x_1^{p^s}(\overline{g}-\overline{f})+x_0^{p^s-1}(x_0\overline{h}+x_1\overline{f})=0$,  
and since the  module of syzygies on $x_0^{p^s-1},x_1^{p^s}$ is generated by the Koszul syzygy, this 
implies that the degree of $f$ is at least $p^s-1$ (unless 
$\overline{g}-\overline{f}=0$ and $x_0\overline{h}+x_1\overline{f}=0$, in which case
$x_0\overline{h}+x_1\overline{f}=0$ implies $x_0$ divides $\overline{f}=\overline{g}=x_1^{p^s}$, which is impossible).
Thus the least degree of an element of $I$ not vanishing at $q$ is at least $2p^s-1$. But one 
such element in $\field[\pr{2}]$ is $FG/x_1$, and it has degree exactly $2p^s-1$, 
so the least degree of an element of $I$ not vanishing at $q$ is exactly $2p^s-1$.

To show that $I^r$ does not contain $I^{(p^s)}$ for $r=(p^s+1)/2$ (where 
$p$ is odd, hence $p > 2$) consider the product $P$ of the $\field'$-lines in $\pr{2}(\field')$ 
not containing $q$. This has degree $p^{2s}$, and there are $p^s$ such lines through 
each point of $X$, so $P$ is in $I^{(p^s)}$. From the previous considerations, the least degree of a form 
in $I^r$ not vanishing at $q$ is $r(2p^s-1)=(p^s+1)(2p^s-1)/2=p^{2s}+(p^s-1)/2 > p^{2s}$. 
Thus $P$ is not in $I^r$ and hence $I^r$ does not contain $I^{(p^s)}=I^{(2r-1)}$.
\end{proof}

\section{Counterexamples in $\pr N$, $N\geq2$}\label{N>=2}

Throughout this section, let $\field$ be a field of characteristic $p>0$ and let $\field'$ be the subfield of order $p$.
Let $I\subseteq \field[\pr{N}]=\field[x_0,\ldots, x_N]$ be the ideal of 
all of the $\field'$-points of $\pr{N}=\pr{N}(\field)$ but one.
In this section we prove that 
$I^{(Nr-(N-1))} \not\subseteq I^r$ always holds for the following cases:
\begin{enumerate}
\item
$p>2$, $r=2$ and $N=(p+1)/2$ (in which case $Nr-(N-1)=(p+3)/2$) and 
\item
$r=(p+N-1)/N$ (in which case $Nr-(N-1)=p$), $p>(N-1)^2$ and $p\equiv 1 (\!\!\!\mod N)$.
\end{enumerate}

\begin{rem}
Proposition \ref{counterexP2charp} shows that the second statement is
true in the case of $N=2$ for arbitrary primes $p>2$ in a more general setting, 
where $\field'$ is a finite field of order $p^s$.
We also note that $I^{(Nr-(N-1))} \subseteq I^r$ when $N=4$, $r=2$ and $p=5$,
so some condition (such as our $p>(N-1)^2$) is needed to ensure that $p$ is not too small.
\end{rem}

We begin by establishing a few preliminary lemmas.

\begin{lem}\label{binomialidentitygeneral}
Let $Q(x_0,\ldots,x_N)=((x_0+1)\cdots(x_N+1))^{p-1}\in \field'[x_0,\ldots,x_N]$, where $N$ is a positive integer
with $N\leq p$. Let $Q_{p-1,N}$ be the homogeneous component of $Q$ in degree $p-1$. 
Then $Q_{p-1,N}$ vanishes to order exactly $p-N$ at $[1:\cdots:1]$.
\end{lem}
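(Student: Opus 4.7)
The plan is to identify $Q_{p-1,N}$ with a complete homogeneous symmetric polynomial, expand it in local coordinates at $[1:\cdots:1]$, and extract the vanishing order via Lucas' theorem.

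First, using the characteristic-$p$ congruence $\binom{p-1}{k}\equiv(-1)^k\pmod p$, I would write $(x_i+1)^{p-1}\equiv \sum_{k=0}^{p-1}(-1)^k x_i^k \pmod p$ and extract the degree-$(p-1)$ part of the product $Q$ to obtain
$$Q_{p-1,N}=(-1)^{p-1}\sum_{k_0+\cdots+k_N=p-1}x_0^{k_0}\cdots x_N^{k_N},$$
a scalar multiple of the complete homogeneous symmetric polynomial of degree $p-1$ in $x_0,\ldots,x_N$.

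To measure the vanishing order at $[1:\cdots:1]$, I would dehomogenize on $x_0=1$ and shift by $x_i=1+y_i$ for $i\geq 1$, reducing the problem to computing the vanishing order of $Q_{p-1,N}(1,1+y_1,\ldots,1+y_N)$ at the origin. Expanding each factor $(1+y_i)^{k_i}$ by the binomial theorem and swapping sums, the coefficient of a monomial $y_1^{m_1}\cdots y_N^{m_N}$ of total degree $d$ becomes
$$\sum_{\substack{k_0+\cdots+k_N=p-1-d\\ k_i\geq 0}}\prod_{i=1}^N\binom{k_i+m_i}{m_i}.$$
A routine coefficient extraction from $(1-t)^{-(N+1+d)}$ evaluates this sum to $\binom{p+N-1}{N+d}$, independent of the specific tuple $(m_1,\ldots,m_N)$. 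Consequently, the degree-$d$ homogeneous component (in $y$) of $Q_{p-1,N}(1,1+y_1,\ldots,1+y_N)$ equals $(-1)^{p-1}\binom{p+N-1}{N+d}$ times the sum of all degree-$d$ monomials in $y_1,\ldots,y_N$.

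Finally, the vanishing order is the smallest $d\geq 0$ with $\binom{p+N-1}{N+d}\not\equiv 0\pmod p$. Writing $p+N-1=1\cdot p+(N-1)$ (a valid base-$p$ expansion because $N\leq p$), Lucas' theorem gives $p\nmid\binom{p+N-1}{k}$ exactly when the base-$p$ digits of $k$ are dominated digit-by-digit by $(1,N-1)$; that is, when $k\in\{0,\ldots,N-1\}\cup\{p,\ldots,p+N-1\}$. Since $k=N+d\geq N$, the smallest admissible value is $k=p$, giving $d=p-N$; since the sum of all degree-$(p-N)$ monomials in $y_1,\ldots,y_N$ is a nonzero polynomial, the vanishing order is exactly $p-N$. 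The main obstacle is the coefficient-extraction identity in the second step; its useful consequence is that the degree-$d$ part depends on $(m_1,\ldots,m_N)$ only through $d$, compressing the full problem to a single application of Lucas' theorem.
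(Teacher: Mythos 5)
Your proof is correct, and it reaches the conclusion by a genuinely different computation than the paper's. Both arguments share the same skeleton: translate $[1:\cdots:1]$ to the origin, show the Taylor coefficients of degree $d<p-N$ vanish mod $p$ while a degree-$(p-N)$ coefficient survives, and reduce everything to the non-divisibility of a single binomial coefficient. But the paper works directly with the product form of $Q$: it extracts coefficients via iterated partial derivatives (using that $\alpha_i!$ is a unit since $s<p$), differentiates each factor $(x_i+1)^{p-1}$ separately, specializes all variables to $1$, and decides divisibility of $\binom{(N+1)(p-1)-s}{p-1-s}$ by locating the unique multiple of $p$ in an interval of length $p-2-s$. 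You instead observe via $\binom{p-1}{k}\equiv(-1)^k$ that $Q_{p-1,N}$ is $(-1)^{p-1}$ times the complete homogeneous symmetric polynomial $h_{p-1}(x_0,\ldots,x_N)$, shift-and-expand, evaluate the resulting convolution by the generating function $(1-t)^{-(N+1+d)}$, and finish with Lucas' theorem applied to $\binom{p+N-1}{N+d}$. Your route buys two things the paper's does not: it avoids the derivative bookkeeping entirely, and it shows the stronger structural fact that the degree-$d$ part of the local expansion is a scalar multiple of $h_d(y_1,\ldots,y_N)$ (so the lowest nonzero jet is the full symmetric sum of degree-$(p-N)$ monomials, not merely some nonzero form). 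The paper's route, on the other hand, stays entirely elementary about binomial coefficients (no Lucas) at the cost of a more delicate interval argument. All steps of your argument check out, including the two points where care is needed: the exponent bound $k_i\leq p-1$ is automatic because the total degree is $p-1$, and the base-$p$ expansion $p+N-1=1\cdot p+(N-1)$ is legitimate precisely because of the hypothesis $N\leq p$.
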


 \begin{proof} 
 We suppress the subscript $N$ in $Q_{p-1,N}$, since doing so will not cause confusion in this proof.
To translate $[1:\cdots:1]$ to $[1:0:\cdots:0]$, we make the substitution 
$Q_{p-1}(x_0,x_1+1,\ldots,x_N+1)$; we must show that
$Q_{p-1}(1,x_1+1,\ldots,x_N+1)$ has no monomial terms of degree less than $p-N$
but has a term of degree $p-N$. For appropriate non-negative
integers $\alpha_i$ and elements $c_{\alpha_1,\ldots,\alpha_N}\in\field'$, we can write
$Q_{p-1}(1,x_1+1,\ldots,x_N+1)=\sum_{\alpha_1+\cdots+\alpha_N\leq p-1}
c_{\alpha_1,\ldots,\alpha_N}x_1^{\alpha_1}\cdots x_N^{\alpha_N}$. With
$s=\alpha_1+\cdots+\alpha_N \leq p-1$, we have 
\begin{eqnarray*}
\alpha_1!\cdots\alpha_N!c_{\alpha_1,\ldots,\alpha_N}=&
\Big(\frac{\partial^s}{\partial x_1^{\alpha_1}\cdots\partial x_N^{\alpha_N}}Q_{p-1}(x_0,x_1+1\ldots,x_N+1)\Big)\Big|_{(x_0,\ldots,x_N)=(1,0,\ldots,0)}\\
=&\Big(\frac{\partial^s}{\partial x_1^{\alpha_1}\cdots\partial x_N^{\alpha_N}}Q_{p-1}(x_0,\ldots,x_N)\Big)\Big|_{(x_0,\ldots,x_N)=(1,1,\ldots,1)}
\end{eqnarray*}
and since $a_i\leq s < p$, we see that $c_{\alpha_1,\ldots,\alpha_N}=0$ if and only if 
$\Big(\frac{\partial^s}{\partial x_1^{\alpha_1}\cdots\partial x_N^{\alpha_N}}Q_{p-1}\Big)(1,1,\ldots,1)=0$. 
We must show that $c_{\alpha_1,\ldots,\alpha_N}=0$ whenever $s<p-N$ but is sometimes nonzero when $s=p-N$.
For notational simplicity, let 
$$Q_{p-1}^{\alpha_1,\ldots,\alpha_N}(x_0,\ldots,x_N)=\frac{\partial^s}{\partial x_1^{\alpha_1}\cdots\partial x_N^{\alpha_N}}Q_{p-1}(x_0,\ldots,x_N).$$
Also, given a polynomial $F$, we will denote the homogeneous component of $F$ of degree $i$ by $F_i$.
Then we have
\begin{eqnarray*}
Q_{p-1}^{\alpha_1,\ldots,\alpha_N}(x_0,\ldots,x_N) =& \frac{\partial^s}{\partial x_1^{\alpha_1}\cdots\partial x_N^{\alpha_N}}Q_{p-1}(x_0,\ldots,x_N)=\left(\frac{\partial^s}{\partial x_1^{\alpha_1}\cdots\partial x_N^{\alpha_N}}Q(x_0,\ldots,x_N)\right)_{p-1-s} \\
=& \left((x_0+1)^{p-1}\prod_{i=1}^N\frac{\partial^{\alpha_i}}{\partial x_i^{\alpha_i}}((x_i+1)^{p-1})\right)_{p-1-s}\\
=&  \left((x_0+1)^{p-1}\prod_{i=1}^N\alpha_i!  \binom{p-1}{\alpha_i} (x_i+1)^{p-1-\alpha_i}\right)_{p-1-s}.\\
\end{eqnarray*}
Setting $x_i=x$ for all $i$ gives
$$Q_{p-1}^{\alpha_1,\ldots,\alpha_N}(x,\ldots,x)=
\left((x+1)^{p-1}\prod_{i=1}^N\alpha_i! \binom{p-1}{\alpha_i} (x+1)^{N(p-1)-s}\right)_{p-1-s},$$
and evaluating at $x=1$ gives
$$Q_{p-1}^{\alpha_1,\ldots,\alpha_N}(1,\ldots,1)=
\left(\prod_{i=1}^N\alpha_i!  \binom{p-1}{\alpha_i}\right)\binom{(N+1)(p-1)-s}{p-1-s}.$$
This is zero if and only if 
$$\binom{(N+1)(p-1)-s}{p-1-s}=\frac{((N+1)(p-1)-s)\cdots((N+1)(p-1)-s-(p-1-s)+1)}{(p-1-s)!}$$
is zero, and since the denominator of the fraction has no factors of $p$, this is zero
if and only if the numerator has a factor of $p$. But since the width of the interval
$[(N+1)(p-1)-s-(p-1-s)+1,(N+1)(p-1)-s]$ is $p-2-s$, there is at most one factor divisible by $p$.
We now show that this factor, when it exists, is $Np$. Now, $Np$ is a factor if and only if
$N(p-1)=(N+1)(p-1)-s-(p-1-s)<Np\leq (N+1)(p-1)-s$, but $N(p-1)<Np$ always holds, as does
$(N-1)p\leq N(p-1)$ since $N\leq p$ by hypothesis. Thus there is a factor divisible by $p$
if and only if $Np\leq (N+1)(p-1)-s$, but this is equivalent to $s < p-N$. I.e., 
$c_{\alpha_1,\ldots,\alpha_N}=0$ if and only if $s < p-N$, as required.
\end{proof}

\begin{rem}\label{p-N}
Let $T=(x_1-x_0,\ldots,x_N-x_0)$ denote the ideal of the point $[1:\cdots:1]\in \field[\pr{N}]$. 
Lemma \ref{binomialidentitygeneral} shows under the given hypotheses that $Q_{p-1,N}\in T^{\,p-N}$.
\end{rem}

\begin{lem}\label{symbQ}
Let $Q_{p-1,N}$ be the homogeneous polynomial defined in Lemma \ref{binomialidentitygeneral}. 
Let $\pi$ be a point in $\pr{N}(\field)$ with defining ideal 
$I_\pi$ and denote by $t$ the number of coordinates of $\pi$ which are equal to zero.
Then $Q_{p-1,N}(x_0^{p-1},\ldots,x_N^{p-1})\in I_\pi^{p-N+t}$ if $p - N +t \geq 0$ and $t<N$.
\end{lem}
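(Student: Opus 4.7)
The plan is to normalize $\pi$ via a linear change of coordinates, dehomogenize so that the claim becomes a local vanishing-order statement, and then bound that order by Taylor-expanding $Q_{p-1,N}$ about the dehomogenized $\pi$. The key observation is that the Taylor expansion about $(1,\ldots,1,0,\ldots,0)$ decomposes into terms supported only on the ``one''-coordinates, governed by a smaller instance of Lemma \ref{binomialidentitygeneral} (with $N$ replaced by $m:=N-t$), plus terms involving at least one ``zero''-coordinate, which automatically gain a factor of $p-1$ in vanishing order after the substitution $z_i\mapsto x_i^{p-1}$.

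For the normalization, since the nonzero coordinates of $\pi$ lie in $\field'^*$ (so $\pi_i^{p-1}=1$), scaling each $x_i$ by $\pi_i$ and permuting coordinates transforms $\pi$ to $[1:\cdots:1:0:\cdots:0]$ with $m+1$ ones and $t$ zeros, while leaving $Q_{p-1,N}(x_0^{p-1},\ldots,x_N^{p-1})$ fixed and $I_\pi$ equal to $(x_0-x_1,\ldots,x_0-x_m,x_{m+1},\ldots,x_N)$. Dehomogenizing at $x_0$, it suffices to show that
$$R^*(x_1,\ldots,x_N):=Q_{p-1,N}(1,x_1^{p-1},\ldots,x_N^{p-1})$$
vanishes at the affine point $(1,\ldots,1,0,\ldots,0)$ to order at least $p-m=p-N+t$.

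I would then write
$$Q_{p-1,N}(1,z_1,\ldots,z_N)=\sum_\beta c_\beta\prod_{i\leq m}(z_i-1)^{\beta_i}\prod_{i>m}z_i^{\beta_i}$$
and substitute $z_i=x_i^{p-1}$. Since $x_i^{p-1}-1=(x_i-1)(x_i^{p-2}+\cdots+1)\in(x_i-1)$ for $i\leq m$ and $x_i^{p-1}\in(x_i)^{p-1}$ for $i>m$, the $\beta$-term of $R^*$ vanishes at $(1,\ldots,1,0,\ldots,0)$ to order at least $\sum_{i\leq m}\beta_i+(p-1)\sum_{i>m}\beta_i$. If some $\beta_j>0$ with $j>m$, this is at least $p-1\geq p-m$ since $m\geq 1$ by the hypothesis $t<N$. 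Otherwise $\beta_i=0$ for all $i>m$, and $c_\beta$ is a Taylor coefficient of $Q_{p-1,N}(1,z_1,\ldots,z_m,0,\ldots,0)$ at $(1,\ldots,1)$. Setting $x_{m+1}=\cdots=x_N=0$ in $Q=\prod_i(x_i+1)^{p-1}$ before extracting the degree-$(p-1)$ component yields the identity $Q_{p-1,N}(x_0,\ldots,x_m,0,\ldots,0)=Q_{p-1,m}(x_0,\ldots,x_m)$, so Lemma \ref{binomialidentitygeneral} applied to $Q_{p-1,m}$ (valid because $p-N+t\geq 0$ gives $m\leq p$) forces $c_\beta=0$ whenever $|\beta|<p-m$. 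Both cases give the required lower bound.

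The principal obstacle is seeing that a naive substitution $z_i=x_i^{p-1}$ in the containment $Q_{p-1,N}\in(x_1-x_0,\ldots,x_N-x_0)^{p-N}$ produced by Lemma \ref{binomialidentitygeneral} does not work: when $\pi_i=0$, the image $x_i^{p-1}-x_0^{p-1}$ is not in $I_\pi$. The remedy is to Taylor-expand about the base point $(1,\ldots,1,0,\ldots,0)$ and then apply Lemma \ref{binomialidentitygeneral} to the lower-dimensional polynomial $Q_{p-1,m}$ obtained by restricting to the nonzero coordinates, exploiting the identity $Q_{p-1,N}|_{x_{m+1}=\cdots=x_N=0}=Q_{p-1,m}$.
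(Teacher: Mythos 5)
Your proof is correct and takes essentially the same route as the paper: both reduce to Lemma \ref{binomialidentitygeneral} applied in the $(N-t)$-dimensional projective space spanned by the nonzero coordinates of $\pi$, after splitting $Q_{p-1,N}(x_0^{p-1},\ldots,x_N^{p-1})$ into $Q_{p-1,N-t}(x_0^{p-1},\ldots,x_{N-t}^{p-1})$ plus terms divisible by a pure power $x_j^{p-1}$ for some coordinate $x_j$ vanishing at $\pi$. Your Taylor-expansion and local vanishing-order phrasing is just a more explicit rendering of the paper's ideal-containment version of the same decomposition (and, like the paper's proof, it implicitly uses that $\pi$ is a $\field'$-point so that the nonzero coordinate ratios are $(p-1)$-st roots of unity).
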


\begin{proof}
If $t=0$ the conclusion follows from Remark  \ref{p-N} as 
$$Q_{p-1,N}(x_0^{p-1},\ldots,x_N^{p-1})\in (x_1^{p-1}-x_0^{p-1},\ldots,x_N^{p-1}-x_1^{p-1})^{p-N}\subset I_\pi^{\ p-N}.$$ 
For other values of $t$ ($1\leq t < N$), we project to a smaller projective space where we can use the case $t=0$.  
Because  $Q_{p-1,N}$ is symmetric, we may assume that the last $t$ coordinates of $\pi$ are zero. 
Denote by $\overline{\pi}$ the point $\pi$ viewed in the $(N-t)$-dimensional projective space spanned 
by its non-zero coordinates and by $\overline{I}_{\overline{\pi}}$ the corresponding ideal of this point in the 
smaller projective space (and by notational abuse also its extension to $\field[\pr{N}]$). Then the claim 
that  $Q_{p-1}(x_0^{p-1},\ldots,x_N^{p-1})\in I_\pi^{p-N+t}$ is equivalent to 
$$Q_{p-1}(x_0^{p-1},\ldots,x_N^{p-1})\in (\overline{I}_{\overline{\pi}}+(x_{N-t+1},\ldots,x_N))^{p-N+t}.$$

It is easy to see that $Q_{p-1}(x_0^{p-1},\ldots,x_N^{p-1})$ can be written 
as a sum of $Q_{p-1,N-t}(x_0^{p-1},\ldots,x_{N-t}^{p-1})$
together with terms each of which is divisible by at least one of the pure powers $x_{N-t+1}^{p-1},\ldots,x_N^{p-1}$. 
Clearly these latter terms are contained in 
$(x_{N-t+1},\ldots,x_N)^{p-1}$ and by Remark \ref{p-N} the latter polynomial is contained in 
$\overline{I}_{\overline{\pi}}^{\ p-(N-t)}$. Since 
$$(x_{N-t+1},\ldots,x_N)^{p-1}+\overline{I}_{\overline{\pi}}^{\ p-(N-t)}\subset (\overline{I}_{\overline{\pi}}+(x_{N-t+1},\ldots,x_N))^{p-N+t},$$ 
the claim is now proved.
\end{proof}

\begin{lem}\label{Ggen}
Assume the prime $p$ is odd. Set $N=\frac{p+1}{2}$, let $q$ be a $K'$-point of $\pr{N}(\field)$ and let
$I$ be the ideal of all of the $K'$-points of $\pr{N}(\field)$ but $q$. 
Then  $I^{(\frac{p+3}{2})}$ contains a form of degree $p^2$ which does not vanish at $q$.
\end{lem}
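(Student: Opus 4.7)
The plan is to exhibit an explicit form $F\in\field[\pr{N}]_{p^2}$ lying in $I^{((p+3)/2)}$ with $F(q)\neq 0$. Since $\mathrm{PGL}_{N+1}(\field')$ acts transitively on $\field'$-points of $\pr{N}$, after applying a suitable $\field'$-linear change of coordinates we may assume $q=[1:0:\cdots:0]$, a coordinate vertex. Under this change $I$ is replaced by a projectively equivalent ideal, so the statement is unaffected.

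Every hyperplane of $\pr{N}$ that does not pass through $q$ has the form $V(x_0-L)$ for a unique linear form $L$ in the $N$-dimensional $\field'$-vector space $\mathcal{L}:=\field'[x_1,\ldots,x_N]_1$; in total there are $p^N$ such hyperplanes, and each non-$q$ $K'$-point of $\pr{N}(\field')$ lies on exactly $p^{N-1}$ of them. The plan is to choose a subfamily $\mathcal{A}\subseteq\mathcal{L}$ of size $p^2$ so that every non-$q$ $K'$-point lies on at least $N+1=(p+3)/2$ of the hyperplanes $V(x_0-L)$ with $L\in\mathcal{A}$, and then to set
\[
F \;=\; \prod_{L\in\mathcal{A}}(x_0-L).
\]
With this choice, $\deg F = p^2$; $F(q)=\prod_L 1 = 1\neq 0$ because every $L$ is a linear form in $x_1,\ldots,x_N$ with no constant term and the last $N$ coordinates of $q$ vanish; and since the factors $x_0-L$ are pairwise distinct linear forms, the multiplicity of $F$ at any $K'$-point $\pi\ne q$ is exactly the number of $L\in\mathcal{A}$ satisfying $\pi\in V(x_0-L)$, which is at least $N+1$ by construction of $\mathcal{A}$.

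In the base case $p=3$, $N=2$ one has $p^N = p^2 = 9$, so $\mathcal{A}$ is forced to be the entire family of nine lines; each non-$q$ $K'$-point lies on exactly $p^{N-1}=3=N+1$ such lines, and one recovers the construction of \cite{refBCH}. For $p\ge 5$ (and so $N\ge 3$), $\mathcal{A}$ is a proper subset of size $p^2<p^N$, and the main obstacle is producing such an $\mathcal{A}$ explicitly. A counting check shows the consistency of the coverage requirement: the average coverage on a random $p^2$-subfamily is $p^2\cdot p^{N-1}/p^N = p$, which is at least $N+1$ since $p\ge 3$ implies $p\ge (p+3)/2$. Constructing $\mathcal{A}$ explicitly --- for example as an orbit under a suitable subgroup of $\mathrm{GL}_N(\field')$ acting on $\mathcal{L}$, or as a carefully chosen combinatorial design on $\mathcal{L}$ --- is the technical heart of the proof, and the verification of the coverage bound reduces to a stratum-by-stratum check (points with $a_0\ne 0$ versus points on $\{x_0=0\}$, further subdivided by how many of the other coordinates vanish), aided by the symmetric-function identities underlying Lemmas \ref{binomialidentitygeneral} and \ref{symbQ}.
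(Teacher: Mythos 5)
Your proposal reduces the lemma to a concrete combinatorial existence statement, but that statement is exactly where the proof stops: you never produce the family $\mathcal{A}$, and nothing you write establishes that it exists. Reformulated, after identifying $\mathcal{L}$ with $(\field')^N$, you need a subset $\mathcal{A}\subseteq(\field')^N$ of size $p^2$ such that for \emph{every} nonzero linear functional $\phi$ on $(\field')^N$, every fiber $\phi^{-1}(c)\cap\mathcal{A}$ has size at least $\frac{p+3}{2}$ (the fiber over $0$ handles the points on $x_0=0$, the other fibers handle the affine points). The averaging computation only shows that the \emph{mean} fiber size is $p$; it gives no control on the minimum, and a minimum-fiber bound of roughly half the mean, uniformly over all $\approx p^{N-1}$ functionals with $N=\frac{p+1}{2}$, is a genuinely nontrivial equidistribution requirement. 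The natural algebraic candidates fail: if $\mathcal{A}$ is a $2$-dimensional linear subspace of $\mathcal{L}$, then for any $a\neq 0$ with $\mathcal{A}\subseteq\ker(\operatorname{ev}_a)$ (such $a$ exist as soon as $N\geq 3$, i.e.\ $p\geq 5$) the fiber of $\operatorname{ev}_a$ over $1$ is empty, so the point $[1:a]$ is not covered at all; affine translates of subspaces fail for the same reason. So the "technical heart" you defer is not a routine verification, and as written the proof is incomplete for every $p\geq 5$. (For $p=5$, $N=3$ one can check that $\mathcal{A}=\{(s,t,st):s,t\in\field_5\}$ does work, so the approach is not hopeless, but a uniform construction for all odd $p$ would require a separate argument, e.g.\ point counts on the curves $\sum_i a_if_i(s,t)=v$.)

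For comparison, the paper avoids this issue entirely by writing down an explicit form that is \emph{not} a product of $\field'$-linear forms: $G=x_0\,(x_0^{p-1}-x_N^{p-1})\,(Nx_0^{p-1}+\sum_{i=1}^{N-1}x_i^{p-1})\,Q_{p-1,N}(x_0^{p-1},\ldots,x_N^{p-1})$, of degree $1+2(p-1)+(p-1)^2=p^2$. The factor $Q_{p-1,N}(x_0^{p-1},\ldots,x_N^{p-1})$ supplies, via Lemma \ref{symbQ}, vanishing order $p-N+t=\frac{p-1}{2}+t$ at a point with $t$ zero coordinates, and the three low-degree factors are chosen to make up the deficit when $t\leq 1$ (and to handle the points with $t=N$ separately). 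If you want to salvage your hyperplane-arrangement approach, you must either exhibit $\mathcal{A}$ for all odd $p$ or prove its existence nonconstructively; neither is done here.
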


\begin{proof}
After a change of coordinates, we may assume that 
$q=V(x_1,\ldots,x_N)=[1:0:\cdots:0]$.  
We claim that we obtain a form as in the statement of this lemma by setting
$$G=x_0(x_0^{p-1}-x_N^{p-1})(Nx_0^{p-1}+\sum_{i=1}^{N-1}x_i^{p-1})Q_{p-1,N}(x_0^{p-1},\ldots,x_N^{p-1}),$$
where $Q_{p-1,N}$ is the homogeneous polynomial defined in Lemma \ref{binomialidentitygeneral}. 
Indeed, $\deg(G)=1+2(p-1)+(p-1)^2=p^2$. Furthermore it is easy to see $G$ 
does not vanish at $q$ as each of the factors contains a pure power of $x_0$. 

To check $G\in I^{(\frac{p+3}{2})}$, let $\pi$ denote a point in $V(I)$ 
(hence $\pi\neq q$) and let $t$ denote the number of coordinates of $\pi$ that are zero.
Say $t=N$. If $\pi\neq [0:\cdots:0:1]$, then 
$x_0$ has order of vanishing 1 at $\pi$ and
$x_0^{p-1}-x_N^{p-1}$ has order of vanishing $p-1$ at $\pi$, 
so $G$ has order of vanishing at least $p$ at $\pi$ (in fact exactly $p$, but we do not need this).
Since $p$ is odd, we have $p\geq \frac{p+3}{2}$, so $G\in I_\pi^p\subseteq I_\pi^{\frac{p+3}{2}}$.
If $\pi= [0:\cdots:0:1]$, then 
$x_0$ has order of vanishing 1 at $\pi$
and $Nx_0^{p-1}+\sum_{i=1}^{N-1}x_i^{p-1}$ has order of vanishing $p-1$ at $\pi$, 
so as before $G\in I_\pi^{\frac{p+3}{2}}$.

Now say $0\leq t<N$. We recall by Lemma \ref{symbQ} that
$Q_{p-1,N}(x_0^{p-1},\ldots,x_N^{p-1})\in I_\pi^{p-N+t}= I_\pi^{\frac{p-1}{2}+t}$. 
The conclusion now follows immediately from this relation when $t\geq 2$.
To establish that $G\in I^{(\frac{p+3}{2})}$ for $t=0$ or $t=1$,
it is enough to show 
$$x_0(x_0^{p-1}-x_N^{p-1})(Nx_0^{p-1}+\sum_{i=1}^{N-1}x_i^{p-1})$$ 
is an element of  $I_\pi^2$ if $t=0$ or of $I_\pi$ if $t=1$. 

If $t=0$, then it is easy to see that each of the factors $x_0^{p-1}-x_N^{p-1}$ 
and $Nx_0^{p-1}+\sum_{i=1}^{N-1}x_i^{p-1}$ are in $I_\pi$, hence the 
product is in $I_\pi^2$. If $t=1$, then either the $x_0$ coordinate of $\pi$ is 
zero, in which case $x_0\in I_\pi$, or the $x_N$ coordinate of $\pi$ is zero, in which 
case $Nx_0^{p-1}+\sum_{i=1}^{N-1}x_i^{p-1}\in I_\pi$, or neither one of the 
coordinate functions $x_0,x_N$ vanish at $\pi$, in which case $x_0^{p-1}-x_N^{p-1}\in I_\pi$.  
In any case we obtain the desired conclusion, hence $G\in I^{(\frac{p+3}{2})}$.
\end{proof}

\begin{lem}\label{Fgen}
Let  $I$ be the ideal of all of the $K'$-points of $\pr{N}(\field)$ 
but one, which we denote $q$. Then  $I^{(p)}$ contains a form of degree $p(p+N-2)$ which does 
not vanish at $q$.
\end{lem}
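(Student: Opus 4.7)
The plan is to explicitly construct such a form $F$, generalizing the construction of $G$ in Lemma \ref{Ggen}. After a $GL(N+1,\field')$ change of coordinates (which acts transitively on the $\field'$-points of $\pr{N}(\field')$), I may assume $q=[1:0:\cdots:0]$. I will take
\[
F \;=\; x_0^{N-1}\,\prod_{i=1}^{N}\bigl(x_0^{p-1}-x_i^{p-1}\bigr)\,Q_{p-1,N}\bigl(x_0^{p-1},\ldots,x_N^{p-1}\bigr),
\]
where $Q_{p-1,N}$ is as in Lemma \ref{binomialidentitygeneral}. A direct count gives $\deg F=(N-1)+N(p-1)+(p-1)^{2}=p(p+N-2)$, matching the required degree. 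To see $F(q)\ne 0$, note that $x_0^{N-1}$ and each factor $x_0^{p-1}-x_i^{p-1}$ evaluate to $1$ at $q$, while reading off the coefficient of $y_0^{p-1}$ in the expansion of $Q_{p-1,N}(y_0,\ldots,y_N)=\sum \prod_i \binom{p-1}{k_i}y_i^{k_i}$ gives $Q_{p-1,N}(1,0,\ldots,0)=1$.

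The main content of the proof is to verify that $F$ vanishes to order at least $p$ at every $\field'$-point $\pi\ne q$. Let $t$ denote the number of zero coordinates of $\pi$. Lemma \ref{symbQ} supplies that $Q_{p-1,N}(x_0^{p-1},\ldots,x_N^{p-1})$ contributes multiplicity at least $p-N+t$ at $\pi$ when $t<N$, while a direct check (the monomial $x_\ell^{(p-1)^{2}}$ is a summand that does not vanish) shows it contributes $0$ when $\pi=[0:\cdots:0:1:0:\cdots:0]$. Using the factorization $x_0^{p-1}-x_i^{p-1}=\prod_{\zeta\in\field',\,\zeta\ne 0}(x_0-\zeta x_i)$, the factor $x_0^{p-1}-x_i^{p-1}$ has multiplicity $p-1$ at $\pi$ when $\pi_0=\pi_i=0$, multiplicity $1$ when both $\pi_0$ and $\pi_i$ are nonzero, and $0$ otherwise. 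Finally, $x_0^{N-1}$ contributes $N-1$ at $\pi$ when $\pi_0=0$ and $0$ otherwise.

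I would then sum these contributions in three cases: (A) $\pi_0\ne 0$; (B) $\pi_0=0$ with at least two nonzero coordinates (so $1\le t\le N-1$); and (C) $\pi=[0:\cdots:0:1:0:\cdots:0]$ with the $1$ in some position $\ell\ge 1$ (so $t=N$). In case (A) the total is $(p-N+t)+0+(N-t)=p$; in case (B) the total is $(p-N+t)+(N-1)+(t-1)(p-1)=tp\ge p$; and in case (C) the total is $0+(N-1)+(N-1)(p-1)=(N-1)p\ge p$.

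The only delicate part is case (C), where Lemma \ref{symbQ} does not apply. There the needed multiplicity is supplied by the $N-1$ factors $x_0^{p-1}-x_i^{p-1}$ with $i\ne\ell$, each contributing $p-1$, together with the $N-1$ from $x_0^{N-1}$, yielding $(N-1)p\ge p$ precisely because $N\ge 2$. This is why the construction replaces the single factor $x_0(x_0^{p-1}-x_N^{p-1})$ used for $G$ in Lemma \ref{Ggen} with the symmetric product $x_0^{N-1}\prod_{i=1}^{N}(x_0^{p-1}-x_i^{p-1})$, ensuring enough vanishing at every coordinate point on the hyperplane $x_0=0$.
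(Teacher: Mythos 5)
Your proposal is correct and constructs exactly the same form $F=x_0^{N-1}\prod_{i=1}^{N}(x_0^{p-1}-x_i^{p-1})Q_{p-1,N}(x_0^{p-1},\ldots,x_N^{p-1})$ as the paper, with the same degree count, the same use of Lemma \ref{symbQ} for points with $t<N$ zero coordinates, and the same separate treatment of coordinate points on $x_0=0$ via the factors $x_0^{N-1}$ and $x_0^{p-1}-x_i^{p-1}$. Your three-case multiplicity tally (giving $p$, $tp$, and $(N-1)p$ respectively) matches the paper's argument, just organized slightly more explicitly.
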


\begin{proof}
After a projective change of coordinates, we may assume that 
$q=V(x_1,\ldots,x_N)=[1:0:\cdots:0]$.  
We claim that we obtain a form as in the statement of this lemma by setting
$$F=x_0^{N-1}\prod_{i=1}^N(x_0^{p-1}-x_i^{p-1})Q_{p-1,N}(x_0^{p-1},\ldots,x_N^{p-1}),$$
where $Q_{p-1,N}$ is the homogeneous polynomial defined in 
Lemma \ref{binomialidentitygeneral}. Indeed, $\deg(F)=N-1+N(p-1)+(p-1)^2=p(p+N-2)$. Furthermore it is easy 
to see $F$ does not vanish at $q$ as each of the factors contains a pure power of $x_0$.

To check $F\in I^{(p)}$, let $\pi\neq q$ denote a point in 
$V(I)$ and let $t$ be the number of coordinates of $\pi$ that are zero. 
Say $t=N$. Thus $x_0=0$ at $\pi$, so $F$ vanishes to order at least $(N-1)+(N-1)(p-1)=(N-1)p$ at $\pi$.
Now say $t<N$.
By Lemma \ref{symbQ}, $Q_{p-1,N}(x_0^{p-1},\ldots,x_N^{p-1})\in I_\pi^{p-N+t}$.
To conclude that $F\in I_\pi^p$ we need now only show that the product 
$$x_0^{N-1}\prod_{i=1}^N(x_0^{p-1}-x_i^{p-1})$$ 
of its other factors vanishes to order at least $N-t$ at 
$\pi$. Indeed, if the $x_0$ coordinate of $\pi$ is non-zero, each of the factors 
$x_0^{p-1}-x_i^{p-1}$ vanishes at $\pi$ if and only if the corresponding 
coordinate $x_i$ is non-zero so the product vanishes exactly $N-t$ times. 
If the $x_0$ coordinate is zero, then clearly the product vanishes at least 
$2N-t=(N-1)+(N-(t-1))\geq N-t$ times. 
Thus we have $F\in I_\pi^{\ p}$ for every $\pi\neq q$, hence $F\in I^{(p)}$.
\end{proof}

As a second ingredient for the proof of our main theorems, we require a computation 
of the  degrees of elements of $I$ not vanishing at the excluded point, for 
comparison with the results in Lemmas \ref{Ggen} and \ref{Fgen}. In order to do this, we use a tool 
from linkage theory, namely basic double linkage, details on which can be found 
in \cite{refKMMNP}. The  idea of basic double linkage is to construct an ideal as 
$I = FB+J$, where $J \subseteq B$ are homogeneous ideals in a polynomial ring 
$R$ and $F$ is a homogeneous element of $R$, such that $B$ is saturated of codimension 
$c$ (in our case $c=\dim R-1$), $J$ has codimension $c-1$ and is arithmetically 
Cohen-Macaulay (ACM), and $F$ is a non-zerodivisor on $R/J$. Then  
\cite[Proposition 5.1]{refKMMNP} gives that $I$ is saturated and that $I$ is  ACM if 
and only if $B$ is ACM. 
(Although not relevant here, it is instructive to note that in this setup the ideal $I$ can be 
linked in two steps to $G$ via Gorenstein ideals, which justifies the terminology given to this 
construction.)

\begin{lem}\label{BDL}
Let $H$ be a $K'$-hyperplane in $\pr{N}(K)$ not containing a given $K'$-point $q$. 
After a $K'$-projective change of coordinates, we may assume that
$H$ is defined by $F=x_0=0$, that $q=[1:0\cdots:0]$ and that
$\field[\pr{N}]/(F)=\field[x_1,\ldots,x_n]\subset\field[x_0,\ldots,x_N]$.
Let $J$ denote the ideal of $R=\field[\pr{N}]$ generated by all the forms
in $\field[\pr{N}]/(F)$ vanishing on every $\field'$-point in 
$H$. Let  $B$ be the ideal of the $p^{N}-1$ points in $\pr{N}(\field')$ which
are not on $H$ and are distinct from $q$. 
Let $I$ be the basic double link of $B$ obtained as $I=FB+J$. 
Then $I$ is the radical ideal of all of the $\field'$-points of $\pr{N}(\field)$ other than $q$.
\end{lem}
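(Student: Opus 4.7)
The plan is to verify that $I = FB + J$ equals the radical ideal $I_q$ of the $\field'$-points of $\pr{N}(\field)$ other than $q$, by establishing both inclusions. For $I \subseteq I_q$: the product $FB$ vanishes on $V(F)\cup V(B) = H \cup V(B)$, which covers every $\field'$-point other than $q$, and $J$ vanishes on the cone of lines through $q$ and the $\field'$-points of $H$, which contains every $\field'$-point of $\pr{N}(\field')$; hence every generator of $I$ vanishes on $\pr{N}(\field')\setminus\{q\}$.

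For the reverse inclusion, I would split $\pr{N}(\field')\setminus\{q\}$ into the $\field'$-points on $H$ (cut out in $R$ by $(F) + J$) and the points of $V(B)$ (cut out by $B$), so that $I_q = ((F)+J)\cap B$. The task then reduces to the basic double linkage identity
\[
((F) + J)\cap B \;=\; FB + J.
\]
The inclusion $\supseteq$ uses only $J \subseteq B$: any point of $V(B)$ lies off $H$ and is distinct from $q$, so the line through $q$ and that point meets $H$ in a $\field'$-point; consequently every form of $J$ (which involves only $x_1,\ldots,x_N$ and vanishes on every $\field'$-point of $H$) vanishes on $V(B)$. For the harder inclusion $\subseteq$, given $g \in ((F)+J)\cap B$ I would write $g = Fh + j$ with $j \in J$, so that $Fh = g - j \in B$ since $J \subseteq B$. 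The associated primes of $R/B$ are the maximal ideals of the reduced points of $V(B)$, none of which contain $x_0$ (since all these points lie off $H = V(x_0)$), so $F = x_0$ is a non-zerodivisor on $R/B$; this forces $h \in B$, and therefore $g = Fh + j \in FB + J = I$.

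The main obstacle is the displayed identity, or equivalently the non-zerodivisor condition on $F$ together with $J \subseteq B$; both have immediate geometric interpretations in the present setup. Alternatively, one could appeal to \cite[Proposition 5.1]{refKMMNP} to obtain that $I$ is saturated (and ACM) and then deduce radicality by a local analysis, checking at each point $\pi \in V(I)$ that $I_\pi$ equals the maximal ideal $\mathfrak{m}_\pi$ of $R_\pi$ — but the direct identity approach above seems cleaner and keeps everything at the level of ideal manipulations.
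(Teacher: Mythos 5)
Your proof is correct, and it takes a genuinely different route from the paper's. The paper verifies the hypotheses of basic double linkage and cites \cite[Proposition 5.1]{refKMMNP} to conclude that $I=FB+J$ is saturated, deduces $I\subseteq A$ (with $A$ the radical ideal) from saturatedness plus equality of zero loci, and then finishes by a local sheaf-theoretic comparison of stalks (off $x_0=0$ both agree with $B$; away from $V(B)$ both agree with $(F)+J$, which is radical by transversality). You instead prove the ideal-theoretic identity outright: decompose the radical ideal as $((F)+J)\cap B$ according to the points on $H$ versus those off $H$, and then establish $((F)+J)\cap B=FB+J$ from $J\subseteq B$ together with the fact that $F=x_0$ is a non-zerodivisor on $R/B$ (note the paper's hypothesis is phrased as non-zerodivisor on $R/J$, but your use of $R/B$ is the right one for this direction and holds because no associated prime of the saturated radical ideal $B$ contains $x_0$). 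Your argument is more elementary and self-contained, avoiding both the liaison citation and the stalk comparison; the paper's route buys the additional structural information that $I$ is saturated and ACM, which is tacitly convenient for the minimal-generator and $h$-vector reasoning in Proposition \ref{bpfdegreegen}, though it is not needed for the lemma itself. The one step you state without proof — that the radical ideal of the $\field'$-points of $H$ in $R$ is exactly $(F)+J$ — is routine (write $g=x_0h+g(0,x_1,\ldots,x_N)$ and note the degree-zero-in-$x_0$ part lies in $J$), but it would be worth a sentence.
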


\begin{proof}
First note that the setting of the basic double link construction is satisfied. As $B$ is an ideal 
of a finite set of points and $J$ is the ideal of the projective cone over a finite set of points, 
they are ACM, furthermore $J \subseteq B$, since $V(J)$ is a union of lines in $\pr{N}(K)$, 
one for each $\field'$-point of $H$, which cover all $\field'$ points in $\pr{N}(\field)$ and 
$V(B)$ is thus contained in $V(J)$. Additionally, $F$ is clearly a non-zerodivisor on $R/J$. 
Then \cite[Proposition 5.12]{refKMMNP} gives that $I$ is saturated. It is clear that 
$I$ does not vanish at $q$ since neither $B$ nor $F$ do. Moreover, $I$ vanishes 
at all other $\field'$-points since $J$ and also one of $F$ or $B$ do. Let $A$ be the ideal
of all of the $\field'$-points of $\pr{N}(\field)$ other than $q$. Since $A$ and $I$ are both 
saturated and have the same zero-locus while $A$ is radical, we see $I\subseteq A$.
It is now enough to check that their associated sheaves have the same stalks to conclude
that $I=A$. But off $x_0=0$, $A$ and $I$ both have the same stalks as $B$,
and, away from $V(B)$, $A$ and $I$ both have the same stalks as $(F)+J$
(note that $(F)+J$ is radical since $H$ is transverse to the lines defined by $J$).
\end{proof}

In the following,  for a point scheme $X$ in $\pr{N}$  we define the {\em h-vector} of 
$X$ (or of $I_X\subset R=K[\pr{N}]$) to be the first difference of the Hilbert function of 
$R/I_X$. Then the degree of $X$ is given by the sum of the entries of its h-vector, 
which we denote by $h_i(I_X)$. We remark that if the residue field of $R$ is infinite, 
the $h$-vector of $X$ can be obtained by the process of artinan reduction as the Hilbert 
function of $R/(I_X+L)$, where $L$ is a linear form regular on $R/I_X$. 

\begin{prop}\label{bpfdegreegen}
Let $I$ be the ideal of all of the $\field'$-points of $\pr{N}(\field)$ but one.
Then  the smallest degree $n$ such that $I_n$ contains a form which does not 
vanish at every point of $\pr{N}(\field)$ is $n=N(p-1)+1$.
\end{prop}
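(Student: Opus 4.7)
The plan is to apply the basic double link decomposition from Lemma \ref{BDL} to rewrite $I = x_0 B + J$ and thereby reduce to an affine calculation. Since $J$ is generated by forms in $x_1,\ldots,x_N$, every element of $J$ vanishes at $q=[1:0:\cdots:0]$. Writing any $f\in I_n$ as $f=x_0 g+h$ with $g\in B_{n-1}$ and $h\in J_n$ we get $f(q)=g(q)$, so the desired minimum equals $1+\min\{\deg g : g\in B,\ g(q)\neq 0\}$. Dehomogenizing at $x_0$ identifies $B$ with the affine ideal $\widetilde B \subseteq\field[y_1,\ldots,y_N]$ of the punctured affine space $(\field')^N\setminus\{0\}$, and the problem becomes finding the minimum degree of a polynomial $\widetilde g\in\widetilde B$ with $\widetilde g(0)\neq 0$.

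For the upper bound I will take $\widetilde g = \prod_{i=1}^N(1-y_i^{p-1})$. By Fermat's little theorem this vanishes on all of $(\field')^N\setminus\{0\}$, equals $1$ at the origin, and has degree $N(p-1)$. For the matching lower bound, rescale so $\widetilde g(0)=1$. Then $\widetilde g$ and $\prod_{i=1}^N(1-y_i^{p-1})$ represent the same function on $(\field')^N$, so their difference lies in the vanishing ideal $(y_1^p-y_1,\ldots,y_N^p-y_N)$. The leading terms $y_i^p$ are pairwise coprime, so $\{y_i^p-y_i\}$ is a Groebner basis under any graded monomial order. Each reduction step rewrites a summand $c\cdot m\cdot y_i^p$ as $c\cdot m\cdot y_i$, strictly lowering the degree of that summand by $p-1$ and leaving all other summands unchanged. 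Hence iterated reduction does not increase total degree and terminates at the unique normal form, which is $\prod_{i=1}^N(1-y_i^{p-1})$ itself, of degree $N(p-1)$. It follows that $\deg \widetilde g \geq N(p-1)$.

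Combining the bounds gives $\min\{\deg g:g\in B,\ g(q)\neq 0\}=N(p-1)$, hence $n=N(p-1)+1$. The main technical point is the justification that reduction modulo $\{y_i^p-y_i\}$ does not increase total degree, which hinges on the elementary fact that $y_i^p \to y_i$ strictly lowers degree within a monomial. As a more structural alternative, one may note that $B$ is obtained by deleting the single point $q$ from the complete intersection of type $(p,\ldots,p)$ defined by $F_i = \prod_{a\in\field'}(x_i-ax_0)$; the Artinian reduction of this complete intersection is Gorenstein with one-dimensional socle in top degree $N(p-1)$, and deleting a point corresponds to quotienting out that socle, yielding the same conclusion.
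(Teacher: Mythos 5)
Your proof is correct, and it takes a genuinely different route for the key degree computation. Both you and the paper start from the basic double link decomposition $I = x_0B + J$ of Lemma \ref{BDL} together with the observation that every element of $J$ vanishes at $q$, so that the problem reduces to finding the least degree of an element of $B$ not vanishing at $q$. The divergence is in how that least degree is computed. The paper stays projective: it identifies the ideal $C$ of all $p^N$ points off $H$ (including $q$) with the complete intersection $(x_1(x_1^{p-1}-x_0^{p-1}),\ldots,x_N(x_N^{p-1}-x_0^{p-1}))$, reads its regularity $N(p-1)+1$ off the Koszul resolution, and invokes the Cayley--Bacharach theorem of \cite{refDGO} to get the lower bound $N(p-1)$ --- this is precisely the ``structural alternative'' you sketch in your last sentence (stated a bit loosely there: Cayley--Bacharach says a form of degree at most $\operatorname{reg}-2$ vanishing on all but one point of the complete intersection must vanish at the remaining point, rather than literally ``quotienting out the socle''). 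Your main argument instead dehomogenizes and runs a normal-form computation modulo the Gr\"obner basis $\{y_i^p-y_i\}$ (pairwise coprime leading terms, so Buchberger's first criterion applies) of the vanishing ideal of $(\field')^N$; since each reduction step strictly lowers the degree of the term being rewritten, every representative of the class of $\prod_i(1-y_i^{p-1})$ has degree at least $N(p-1)$. This is more elementary and self-contained --- no liaison-theoretic regularity bound and no Cayley--Bacharach are needed --- at the cost of being tailored to this particular complete intersection, whereas the paper's argument works verbatim for any complete intersection through $q$. The two arguments produce the same extremal form, since $\prod_i(x_0^{p-1}-x_i^{p-1})$ is the homogenization of your $\prod_i(1-y_i^{p-1})$.
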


\begin{proof}
Let $q$ and $H$ be as in Lemma \ref{BDL}, and let
$C$ be the defining ideal of all $K'$-points away from $H$. 
These just comprise the grid of $p^N$ points in affine $N$ space with coordinates in $K'$.
Thus $C=\cap (x_1-a_1x_0,\ldots,x_N-a_Nx_0)$, where the intersection is over
all $(a_1,\ldots,a_N)\in(K')^N$. In fact, $C$ is equal to the complete intersection 
$C'=(x_1(x_1^{p-1}-x_0^{p-1}),\ldots,x_N(x_N^{p-1}-x_0^{p-1}))$. It is easy to see that
$C'\subseteq C$, that both are radical ideals and both have the same zero locus.
Since $C$ is saturated by construction and $C'$ is saturated by the unmixedness theorem,
we have $C=C'$. Since, as is well known, the Koszul complex gives a minimal free resolution 
for a complete intersection, the regularity of $C'$ is $\Sigma-N+1$, where $\Sigma=Np$
is the sum of the degrees of the $N$ generators. Thus by the Cayley-Bacharach Theorem
(see the remark after Therem 3 of \cite{refDGO}), the least degree of a form $G$
vanishing on all points of $C$ except $q$ is $Np-N=N(p-1)$.

Now we turn our attention to $I$. By Lemma \ref{BDL}, this ideal is the basic 
double link  $I=FB+J$.  The generators of $J$ all vanish at $q$ and among the 
minimal generators of $FB$ only $FG$ does not vanish at $q$, this means that 
any form in $I$ not vanishing at $q$ arises as a combination of the minimal 
generators which uses $FG$ nontrivially (with non-zero coefficient). In particular, 
this shows both that there is a form, namely $FG$, of degree $N(p-1)+1$ in $I$ 
not vanishing at $q$ and also that there are no forms of smaller degree with this property.
\end{proof}

\begin{thm}\label{reg=2}
Let $p$ be odd.
Set $N=\frac{p+1}{2}$ and let $I$ be the ideal of all of the $\field'$-points of $\pr{N}(\field)$ but one.
Then we have $I^{(\frac{p+3}{2})} \not\subseteq I^2$. 
\end{thm}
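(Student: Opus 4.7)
The plan is to combine Lemma \ref{Ggen} (which produces a symbolic element of low degree not vanishing at $q$) with Proposition \ref{bpfdegreegen} (which bounds from below the degrees of ordinary-power elements not vanishing at $q$), and then observe that the numerics for $N=(p+3)/2$ — oops, $N=(p+1)/2$ — force $G \notin I^2$.

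More precisely, first I would invoke Lemma \ref{Ggen} to produce a form $G \in I^{(\frac{p+3}{2})}$ with $\deg G = p^2$ and $G(q)\neq 0$. Next I would use Proposition \ref{bpfdegreegen} to record that the least degree of an element of $I$ not vanishing at $q$ is $N(p-1)+1$. The key step is then a standard homogeneity argument: writing a homogeneous $H\in I^2$ as $H=\sum_i f_i g_i$ with $f_i,g_i \in I$ and $\deg f_i+\deg g_i = \deg H$, one sees that $H(q)\neq 0$ forces at least one term with $f_i(q)g_i(q)\neq 0$; this term must then have $\deg f_i, \deg g_i \geq N(p-1)+1$, so $\deg H \geq 2(N(p-1)+1)$.

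Finally I would substitute $N=(p+1)/2$ to get
\[
2\bigl(N(p-1)+1\bigr) \;=\; (p+1)(p-1)+2 \;=\; p^2+1 \;>\; p^2 \;=\; \deg G.
\]
Hence $G$ cannot lie in $I^2$, while $G\in I^{(\frac{p+3}{2})}$ by construction, proving $I^{(\frac{p+3}{2})}\not\subseteq I^2$.

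There is essentially no serious obstacle left: the combinatorial heart of the argument has already been done in Lemmas \ref{binomialidentitygeneral}--\ref{Ggen} and Proposition \ref{bpfdegreegen}. The only subtle point worth stating carefully in the write-up is the homogeneity observation yielding the lower bound $2(N(p-1)+1)$ on the degree of a form in $I^2$ with nonzero value at $q$; this keeps one from being tempted to allow cancellation between ``high-degree'' and ``low-degree'' summands, which is impossible once $H$ is homogeneous.
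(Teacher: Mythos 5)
Your proposal is correct and follows essentially the same argument as the paper: take the degree-$p^2$ form $G\in I^{(\frac{p+3}{2})}$ from Lemma \ref{Ggen}, note via Proposition \ref{bpfdegreegen} that any element of $I^2$ not vanishing at $q$ has degree at least $2(N(p-1)+1)=p^2+1$, and conclude $G\notin I^2$. Your explicit spelling-out of the homogeneity step is a nice clarification of what the paper leaves implicit, but the route is the same.
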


\begin{proof}
 Consider the form $G\in I^{(\frac{p+3}{2})}$ given by Lemma \ref{Ggen}. Proposition \ref{bpfdegreegen}
implies that any element of $I^2$ not vanishing at $q$ must have degree at least $2(N(p-1)+1)=p^2+1$.  
Since $G$ is a form of degree $p^2$ not vanishing at $q$, it follows that $G\not \in I^2$. 
This concludes the proof.
\end{proof}

\begin{thm}\label{symb=p}
Let  $I$ be the ideal of all of the $\field'$-points of $\pr{N}(\field)$ but one. 
Let $r=(p+N-1)/N$ and assume that $p\equiv 1 (\!\!\!\mod N)$ and $p > (N-1)^2$. 
Then we have $I^{(p)}=I^{(Nr-(N-1))} \not\subseteq I^r=I^{(p+N-1)/N}$. 
\end{thm}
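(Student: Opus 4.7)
The plan is to follow exactly the same pattern used in the proof of Theorem \ref{reg=2}: produce an explicit form in $I^{(p)}$ that does not vanish at the excluded point $q$, then show its degree is too small for it to belong to $I^r$.

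First, note that the hypothesis $p\equiv 1\pmod N$ makes $r=(p+N-1)/N$ a positive integer, and one checks that $Nr-(N-1)=p$, so the two descriptions of the symbolic power in the statement agree. Next, Lemma \ref{Fgen} provides a form $F\in I^{(p)}$ of degree $p(p+N-2)=p^2+(N-2)p$ which does not vanish at $q$. It therefore suffices to show that $F\notin I^r$, and for this it is enough to prove that any homogeneous element of $I^r$ not vanishing at $q$ has degree strictly larger than $\deg(F)$.

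Here is the key observation. By Proposition \ref{bpfdegreegen}, the smallest degree of a homogeneous element of $I$ not vanishing at $q$ is $N(p-1)+1$. Now suppose $g\in I^r$ is homogeneous of degree $d$ with $g(q)\neq 0$. Decomposing $g$ in the standard way as a sum of products $f_1\cdots f_r$ of $r$ homogeneous elements of $I$ with $\sum_i \deg f_i=d$, the condition $g(q)\neq 0$ forces at least one such product to be nonzero at $q$, hence each factor $f_i$ in that product is nonzero at $q$ and thus has $\deg f_i\geq N(p-1)+1$. Summing gives $d\geq r(N(p-1)+1)$.

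The final step is the numerical comparison. One computes
\[
r\bigl(N(p-1)+1\bigr)=\frac{(p+N-1)(N(p-1)+1)}{N}=p^2+\frac{(N-1)^2(p-1)}{N},
\]
so the inequality $\deg(F)<r(N(p-1)+1)$ becomes $(N-2)p<(N-1)^2(p-1)/N$. Clearing denominators and using the identity $(N-1)^2-N(N-2)=1$, this simplifies precisely to $p>(N-1)^2$, which is the standing hypothesis. Consequently $\deg(F)<r(N(p-1)+1)$, so $F\notin I^r$, and therefore $I^{(p)}\not\subseteq I^r$. The only slightly nontrivial point is the justification in the middle paragraph that the $I$-degree bound lifts cleanly to $I^r$; the arithmetic check of $p>(N-1)^2$ being exactly what makes things work is the hoped-for payoff rather than an obstacle.
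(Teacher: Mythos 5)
Your proposal is correct and follows essentially the same route as the paper: take the form $F\in I^{(p)}$ of degree $p(p+N-2)$ from Lemma \ref{Fgen}, use Proposition \ref{bpfdegreegen} to bound below by $r(N(p-1)+1)$ the degree of any element of $I^r$ not vanishing at $q$, and check that $p>(N-1)^2$ makes $\deg(F)$ too small. Your middle paragraph just spells out explicitly the step the paper states tersely as $F\in (I_{\geq N(p-1)+1})^{(p+N-1)/N}$, and your arithmetic is equivalent to the paper's.
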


\begin{proof}
Consider the form $F\in I^{(p)}$ given by Lemma \ref{Fgen}. We shall prove that $F\not \in I^{(p+N-1)/N}$. 
Assume towards a contradiction that  $F \in I^{(p+N-1)/N}$. By Proposition \ref{bpfdegreegen}, 
since $F$ does not vanish at $q$, it would have to be the case that $F\in (I_{\geq N(p-1)+1})^{(p+N-1)/N}$, yielding
\begin{eqnarray*}
\deg(F)& \geq& \frac{(N(p-1)+1)(p+N-1)}{N}=(p-1)(p+N-1) +\frac{p+N-1}{N} \\
&=&p(p+N-2)+2-N+\frac{p-1}{N}>p(p+N-2),
\end{eqnarray*}
where the last inequality follows because $p>(N-1)^2$ implies 
$2-N+\frac{p-1}{N}>0$. This contradicts $\deg(F)=p(p+N-2)$, and so concludes the proof.
\end{proof}

\section*{Acknowledgements}
We  thank I.\ Dolgachev, E.\ Guardo and A.\ Van Tuyl for helpful comments,
and M.\ Dumnicki, T.\ Szemberg and H.\ Tutaj-Gasi\'nska for sharing the latest 
version of their paper with us.


\begin{thebibliography}{KMMNP}

\bibitem[B. et al]{refB. et al} T.\ Bauer, S.\ Di Rocco, B.\ Harbourne, M.\ Kapustka, A.\ Knutsen, W.\ 
Syzdek and T.\ Szemberg. {\it A primer on Seshadri constants}, 
pp.\ 33--70, in:
Interactions of Classical and Numerical Algebraic Geometry, 
Proceedings of a conference in honor of A.\ J.\ Sommese, held at Notre Dame, May 22--24 2008.
Contemporary Mathematics vol.\ 496, 2009, eds.
D.\ J.\ Bates, G-M.\ Besana, S.\ Di Rocco, and C.\ W.\ Wampler,  362 pp.
(arXiv:0810.0728).

\bibitem[BCH]{refBCH} C.\ Bocci, S.\ Cooper and B.\ Harbourne.
{\it Containment results for ideals of various configurations of points in $\pr N$},
to appear, J.\ Pure Appl.\ Alg., arXiv:1109.1884.

\bibitem[BH1]{refBH} C.\ Bocci and B.\ Harbourne.
{\it Comparing Powers and Symbolic Powers of 
Ideals}, J. Algebraic Geometry, 19 (2010) 399--417, arXiv:  0706.3707.

\bibitem[BH2]{refBH2} C.\ Bocci and B.\ Harbourne.
{\it The resurgence of ideals of points and the containment problem},
Proc.\ Amer.\ Math.\ Soc., Volume 138, Number 4, (2010) 1175--1190, arXiv:0906.4478.

\bibitem[DGO]{refDGO}
E.\ D.\ Davis, A.\ V.\ Geramita, and F.\ Orecchia.
{\it Gorenstein algebras and
the Cayley-Bacharach theorem}. Proc.\ Amer.\ Math.\ Soc.
{\bf 93} (1985), no.\ 4, 593--597.

\bibitem[D]{refD} A.\ Denkert. 
Ph.D. thesis, in preparation.

\bibitem[DJ]{refDJ} A.\ Denkert and M.\ Janssen. 
{\it  Containment problem for points on a reducible 
conic in $\pr2$}, preprint, arXiv:1207.7153.

\bibitem[Du]{refDu} M.\ Dumnicki.
{\it Symbolic powers of ideals of generic points in $\pr3$},
J.\ Pure Appl.\ vol.\ 216 (6) (2012), 1410--1417 (arXiv:1105.0258).

\bibitem[DST]{refDST} M.\ Dumnicki, T.\ Szemberg and H.\ Tutaj-Gasi\'nska.
{\it A counter-example to a question by Huneke and Harbourne}, to appear, J.\ Algebra 5 pp., 
arXiv:1301.7440.

\bibitem[EHS]{refEHS} R.\ Embree, A.\ Hoefel, and G.\ Smith.
Preprint in preparation.

\bibitem[ELS]{refELS} L.\ Ein and R.\ Lazarsfeld and K.\ Smith.
{\it Uniform Behavior of Symbolic Powers of Ideals}, Invent. Math., 144 (2001), 241--252.

\bibitem[HaHu]{refHaHu} B.\ Harbourne and C.\ Huneke. {\it Are symbolic powers highly evolved?}.
preprint, arXiv:1103.5809, to appear, J.\ Ramanujan Math.\ Soc.

\bibitem[HoHu]{refHoHu}
M.\ Hochster and C.\ Huneke. {\it Comparison of symbolic and 
ordinary powers of ideals}. Invent. Math. {\bf 147} 
(2002), no.~2, 349--369.

\bibitem[KMMNP]{refKMMNP}
J.\ O.\ Kleppe, J.\ Migliore, R.\ Mir\'o-Roig, U.\ Nagel and C.\ Peterson.
{\it Gorenstein liaison, complete intersection liaison invariants and unobstructedness}.
Mem. Amer. Math. Soc. 154 (2001), no. 732.

\bibitem[M2]{refM2}
D.\ R.\ Grayson and M.\ E.\ Stillman.
{\it Macaulay2, a software system for research in algebraic geometry},
Available at \url{http://www.math.uiuc.edu/Macaulay2/}.


\end{thebibliography}
\end{document}